\newtheorem{theorem}{Theorem}[section]
\newtheorem{proposition}[theorem]{Proposition}
\newtheorem{corollary}[theorem]{Corollary}
\newtheorem{problem}[theorem]{Problem}
\theoremstyle{definition}
\newtheorem{definition}[theorem]{Definition}
\newtheorem{example}[theorem]{Example}
\newtheorem{remark}[theorem]{Remark}
\newtheorem{algorithm}[theorem]{Algorithm}
\begin{document}

\title[Trees, algebras, invariants, and elliptic integrals]
{Planar trees, free nonassociative algebras, invariants,
and elliptic integrals}

\author[Vesselin Drensky and Ralf Holtkamp]
{Vesselin Drensky and Ralf Holtkamp}
\address{Institute of Mathematics and Informatics,
Bulgarian Academy of Sciences,
          1113 Sofia, Bulgaria}
\email{drensky@math.bas.bg}
\address{Fakult\"at f\"ur Mathematik, Ruhr-Universit\"at,
          44780 Bochum, Germany}
\email{ralf.holtkamp@ruhr-uni-bochum.de}

\thanks
{The work of the first author was partially supported by Grant
MI-1503/2005 of the Bulgarian National Science Fund.}

\subjclass[2000]
{17A50; 17A36; 17A42; 15A72; 33E05.}

\begin{abstract}
We consider absolutely free algebras with (maybe infinitely) many multilinear
operations. Such multioperator algebras were introduced by Kurosh in 1960.
Multioperator algebras satisfy the Nielsen-Schreier property and
subalgebras of free algebras are also free. Free multioperator algebras
are described in terms of labeled reduced
planar rooted trees. This allows to apply combinatorial techniques to
study their Hilbert series and the asymptotics of their coefficients.
Then, over a field of characteristic 0,
we investigate the subalgebras of invariants under the action
of a linear group, their sets of free generators and their Hilbert series.
It has turned out that, except in the trivial cases, the algebra of invariants
is never finitely generated. In important partial cases the Hilbert series
of the algebras of invariants and the generating functions of
their sets of free generators are expressed in terms of elliptic integrals.
\end{abstract}

\maketitle

\section*{Introduction}

Let $K$ be an arbitrary field of any characteristic. Although
probably most of the $K$-algebras considered in the literature are
$K$-algebras equipped with just one binary operation, some
classical objects are equipped with more than one binary
operations, or even some non-binary operations. For example, quite
often Jordan algebras are considered with the usual multiplication
$u\circ v$ and one more ternary operation, the triple product
$\{uvw\}$. Poisson algebras have two binary operations -- the
Poisson bracket and the commutative and associative
multiplication. Apart from the classical examples, there are of
course also more recently introduced very important algebra types
that extend this list, such as dendriform dialgebras and
trialgebras. Then there are algebras with infinitely many
operations, such as homotopy algebras. The study of primitive
elements in free objects leads quite naturally to algebras with
infinitely many operations, cf.\ \cite{Lo, HLR}.

 In contrast to the case of the free associative algebra,
where the primitive elements form the free Lie algebra, Akivis
algebras (with up to ternary operations) were introduced in 1976
and later used to model the primitives of non-associative
algebras. Then Shestakov and Umirbaev \cite{SU} showed that there
exist primitive elements in the universal enveloping algebras of
free Akivis algebras which are not Akivis elements and gave a
description of the primitive elements. They arrived at algebras
with infinitely many operations, which they called hyperalgebras.

In 1960 Kurosh \cite{K2} introduced multioperator algebras (or
$\Omega$-algebras) as a generalization of multioperator groups
introduced by Higgins \cite{Hi} in 1956. In \cite{K2} Kurosh
established that free $\Omega$-algebras enjoy many of the
combinatorial properties of free nonassociative algebras (with one
binary operation) and suggested their simultaneous study.

Recall that a variety of universal algebras $\mathfrak M$ and its free
algebras satisfy the Schreier property if any subalgebra of a free
algebra of $\mathfrak M$ is free in $\mathfrak M$ again. For
example, free groups are Schreier. A result of Kurosh
\cite{K1} from 1947 states that absolutely free binary algebras are also
Schreier. In \cite{K2} Kurosh proved that free multioperator algebras
are Schreier again.
The variety $\mathfrak M$ satisfies the Nielsen property
if for any system of generators of a free subalgebra $S$ of a free
algebra of $\mathfrak M$ there exists an effective procedure (a
sequence of elementary transformations similar to the Nielsen
transformations in free groups) for obtaining a free set of
generators of $S$. In many important cases a variety satisfies the
Schreier property if and only if it satisfies the Nielsen
property, see Lewin \cite{L}. We shall say that such varieties and
their free algebras satisfy the Nielsen-Schreier property. See for
example the book by Mikhalev, Shpilrain and Yu \cite{MSY} for
different aspects of Nielsen-Schreier varieties. Schreier
varieties of multioperator algebras have been considered by
Burgin and Artamonov in \cite{BA}. In particular, they showed that
the Nielsen and Schreier properties are equivalent
for varieties of multioperator algebras defined by homogeneous polynomial
identities. A survey on the results before 1969 is given by Kurosh
in \cite{K3}. This article is also introductory for several other papers
published in the same issue of Uspehi Mat. Nauk (Russ. Math. Surv.)
and devoted to different aspects of free and close to free $\Omega$-algebras.

All above mentioned algebras are algebras defined over operads. In this
paper, we consider $\Omega$-algebras in the sense of Kurosh. Here $\Omega$
simply is a set of multilinear operations, which can be quite
arbitrary. The only restriction is
that we assume that $\Omega=\Omega_2\cup\Omega_3\cup\cdots$ is a
union of finite sets of $n$-ary operations $\Omega_n$, $n\geq 2$,
otherweise our quantitative results have no sense.
Then we consider the absolutely free nonunitary $\Omega$-algebra
$K\{X\}_{\Omega}$ freely generated by a set $X$. The
``$\Omega$-monomials'' form the free $\Omega$-magma
$\{X\}_{\Omega}={\mathcal Mag}_{\Omega}(X)$ which is a basis of
the vector space $K\{X\}_{\Omega}$ and can be described in terms
of labeled reduced planar rooted trees. In particular, if
$\Omega=\Omega_2$ consists of a single binary operation, then
$K\{X\}_{\Omega}=K\{X\}$ is the free nonassociative algebra and
$\{X\}={\mathcal Mag}_{\Omega}(X)={\mathcal Mag}(X)$ is the usual
free magma (the set of monomials in noncommuting nonassociative
variables). If $X=\{x\}$ consists of one element, then ${\mathcal
Mag}(X)$ is canonically identified with the set of planar rooted
binary trees. Another special case is when $\Omega_n$ consists of
one operation for each $n\geq 2$. Then we obtain the algebra
$K\{X\}_{\omega}$ and for $X=\{x\}$ we may identify
$\{X\}_{\omega}={\mathcal Mag}_{\omega}(X)$ with the set of all
reduced planar rooted trees.

Labeled reduced planar rooted trees have interesting combinatorics. This
allows to apply classical enumeration techniques from graph theory
and to study the Hilbert series of $K\{X\}_{\Omega}$ and the asymptotics of their coefficients.

Since free $\Omega$-algebras have bases which are easily
constructed in algorithmic terms, it is natural to develop a
theory of Gr\"obner (or Gr\"obner-Shirshov) bases. It was
surprising for us that the theory of Gr\"obner bases of free
$\Omega$-algebras is much simpler than the theory of Gr\"obner
bases of free associative algebras. For example, if an ideal of
$K\{X\}_{\Omega}$ is finitely generated then its Gr\"obner basis
is finite. If $J$ is a homogeneous ideal of $K\{X\}_{\Omega}$, we
express the Hilbert series of the factor algebra
$K\{X\}_{\Omega}/J$ in terms of the generating functions of
$\Omega$, $X$ and the Gr\"obner basis of $J$.

Further, we assume that the base field $K$ is of characteristic 0 and study subalgebras
of the invariants $K\{X\}_{\Omega}^G$ under the action
of a linear group $G$ on the free $\Omega$-algebra $K\{X\}_{\Omega}$ for a finite set
of free generators $X$, in the spirit of classical algebraic invariant theory and its generalization
to free and relatively free associative algebras, see the surveys \cite{Dr2, F1, KS}.
We show that the algebra of invariants $K\{X\}_{\Omega}^G$ is never finitely generated, except in
the obvious cases, when all invariants (if any) are expressed by $G$-invariant free generators.
The proof uses ideas of a similar result for relatively free Lie algebras,
see Bryant \cite{Br} and Drensky \cite{Dr1}.
Results of Formanek \cite{F1} and Almkvist, Dicks and Formanek \cite{ADF},
allow to express the Hilbert series of the algebra $K\{X\}_{\Omega}^G$ and the generating function
of the set of its free generators in terms of the Hilbert series of $K\{X\}_{\Omega}$,
which is an analogue of the Molien and Molien-Weyl formulas in commutative invariant theory.
In the important partial cases of a unipotent action of the infinite cyclic group $G$
and an action of the special linear group $SL_2(K)$
we give explicit expressions for the Hilbert series of the algebras
of invariants and the generating functions of their sets of free generators.
Applying these formulas to the free binary algebra $K\{X\}$ we express
the results in terms of elliptic integrals. We give similar formulas
when $\Omega$ has exactly one $n$-ary operation for each $n\geq 2$.

\section{Preliminaries}

We fix a field $K$ of any characteristic and a set of variables
$X$. In most of the considerations we assume that
the set $X$ is finite and $X=\{x_1,\ldots,x_d\}$. One of the main objects
in our paper is the absolutely free nonassociative and
noncommutative $K$-algebra $K\{X\}$ freely generated by the set $X$.
As a vector space it has a basis consisting of all non-associative words in the alphabet $X$.
For example, we make a difference between $(x_1x_2)x_3$ and
$x_1(x_2x_3)$ and even between $(xx)x$ and $x(xx)$. Words of
length $n$ correspond to planar binary trees with $n$ labeled leaves, see
e.g. \cite{GH, Ha}. We omit the parentheses when the
products are left normed. For example, $uvw=(uv)w$ and
$x^n=(x^{n-1})x$.

More generally, following Kurosh \cite{K2}, we consider a set $\Omega$ of multilinear
operations. We assume that $\Omega$ contains $n$-ary operations for $n\geq 2$ only
and for each $n$ the number of $n$-ary operations is finite.
We fix the notation
\[
\Omega_n=\{\nu_{ni}\mid i=1,\ldots,p_n\},\quad n=2,3,\ldots,
\]
for the set of $n$-ary operations. The free $\Omega$-magma
$\{X\}_{\Omega}={\mathcal Mag}_{\Omega}(X)$ consists of all
``$\Omega$-monomials'' and is obtained by recursion, starting with
$\{X\}_{\Omega}:=X$ and then continuing the process by
\[
\{X\}_{\Omega}:=\{X\}_{\Omega}\cup\{\nu_{ni}(u_1,\ldots,u_n)\mid
\nu_{ni}\in\Omega, u_1,\ldots,u_n\in \{X\}_{\Omega}\}.
\]
The set $\{X\}_{\Omega}$ is a $K$-basis of the free $\Omega$-algebra $K\{X\}_{\Omega}$,
and the operations of $K\{X\}_{\Omega}$ are defined using the multilinearity of
the operations in $\Omega$. We call the elements of $K\{X\}_{\Omega}$ $\Omega$-polynomials.

The free $\Omega$-magma $\{X\}_{\Omega}$
can be described also in terms of labeled reduced planar rooted trees.
Recall that a finite connected graph $\emptyset\not= T=(\text{Ve}(T),\text{Ed}(T))$,
with a distinguished vertex $\rho_T$, is called a rooted tree with root $\rho_T$,
if for every vertex $\lambda\in\text{Ve}(T)$ there is exactly one path
connecting $\lambda$ and $\rho_T$. Thinking of the edges as oriented towards the root,
at each vertex there are incoming edges and (except for the root) one outgoing edge.
The leaves of $T$ have no incoming edges and the root has no outgoing edges.
The tree is reduced if there are no edges with one incoming edge. A rooted tree $T$ with a chosen
order of incoming edges at each vertex is called a planar rooted tree.
We label the vertices $\lambda$ of the reduced planar rooted tree $T$ in the following way.
If $\lambda$ is not a leaf and has $n$ incoming edges, then we label it with an $n$-ary operation $\nu_{ni}$.
We call such trees $\Omega$-trees. If $\lambda$ is a leaf of the $\Omega$-tree $T$,
we label it with a variable $x_j\in X$. We refer to such trees as $\Omega$-trees with labeled leaves.
There is a one-to-one correspondence between the $\Omega$-monomials and the $\Omega$-trees with labeled leaves.
For example,
the monomial
\[
\nu_{31}(\nu_{23}(x_1,x_1),x_3,\nu_{32}(x_2,x_1,x_4))
\]
corresponds to the following tree:
\[
 \xymatrix{ {\bullet}_{x_1}\ar@{-}[dr]& {\bullet}_{x_1}\ar@{-}[d]& {\bullet}_{x_3}\ar@{-}[dd]
 & {\bullet}_{x_2}\ar@{-}[d]& {\bullet}_{x_1}\ar@{-}[dl]& {\bullet}_{x_4}\ar@{-}[dll]\\
& {\bullet}_{\nu_{23}}\ar@{-}[dr] & &{\bullet}_{\nu_{32}}\ar@{-}[dl] & & \\
& & {\bullet}_{\nu_{31}}
\\}
\]
\begin{center}Fig.\ 1
\end{center}

 We consider nonunitary algebras only. If we want to
deal with unitary algebras, we need certain coherence conditions,
because we have to express the monomials of the form
$\nu_{ni}(u_1,\ldots,1,\ldots,u_n)$, $u_j\in\{X\}_{\Omega}$, as
linear combinations of elements of $\{X\}_{\Omega}$, see e.g.
\cite{H}.

The algebra $K\{X\}_{\Omega}$ has a natural grading, defined by $\text{deg}(x_j)=1$, $x_j\in X$,
and then extended on the $\Omega$-monomials inductively by
\[
\text{deg}(\nu_{ni}(u_1,\ldots,u_n))=\sum_{j=1}^n\text{deg}(u_j),\quad u_j\in\{X\}_{\Omega}.
\]
Similarly, if $\vert X\vert=d$, then $K\{X\}_{\Omega}$ has a ${\mathbb Z}^d$-grading, or a multigrading,
counting the degree $\text{deg}_j(u)$ of any $\Omega$-word
$u$ in each free generator $x_j\in X$.
For a graded vector subspace $V$ of $K\{X\}_{\Omega}$ we consider the homogeneous component
$V^{(k)}$ of degree $k$. In the multigraded case the (multi)homogeneous component of $V$
of degree $(k_1,\ldots,k_d)$ is denoted by $V^{(k_1,\ldots,k_d)}$. The formal power series
with nonnegative integer coefficients
\[
H(V,t)=\sum_{k\geq 1}\dim(V^{(k)})t^k,
\]
\[
H(V,t_1,\ldots,t_d)=\sum_{k_j\geq 0}\dim(V^{(k_1,\ldots,k_d)})t_1^{k_1}\cdots t_d^{k_d},
\]
are called the Hilbert series of $V$ in the graded and multigraded cases, respectively.
Similarly, if $W$ is a set of (multi)homogeneous elements in $K\{X\}_{\Omega}$, the generating
function of $W$ is
\[
G(W,t)=\sum_{k\geq 1}\#(W^{(k)})t^k,
\]
\[
G(W,t_1,\ldots,t_d)=\sum_{k_j\geq 0}\#(W^{(k_1,\ldots,k_d)})t_1^{k_1}\cdots t_d^{k_d},
\]
where $\#(W^{(k)})$ and $\#(W^{(k_1,\ldots,k_d)})$ are the numbers of homogeneous elements
of the corresponding degree.

The above (multi)gradings work if the set $X$ of free generators
consists of $d$ elements. We may consider more general situation
of ${\mathbb Z}^d$-grading, when the set $X$ is arbitrary (but
still countable). We assign to each $x_j\in X$ a degree
\[
\text{deg}(x_j)=(a_{j1},\ldots,a_{jd}),\quad a_{jk}\geq 0,
\]
and assume that for each $(a_1,\ldots,a_d)\in {\mathbb Z}^d$
there is a finite number of generators of this degree. Again, the algebra
$K\{X\}_{\Omega}$ is ${\mathbb Z}^d$-graded and we may speak about the Hilbert
series of its graded vector subspaces and generating functions of its
subsets consisting of homogeneous elements.

\section{Hilbert series and their asymptotics}

The next result is standard and relates the Hilbert series of $K\{X\}_{\Omega}$
and the generating functions of its operations and generators.
Compare with the cases $K\{X\}$ and $K\{X\}_{\omega}$.
(For the relation between the Hilbert series of $K\{X\}$ with
any ${\mathbb Z}^d$-grading and $K\{x\}$ see e.g. Gerritzen \cite{G}
and Rajaee \cite{R}. For general references on enumeration techniques for graphs see
the book by Harary and Palmer \cite{HP}.)

\begin{proposition}\label{Hilbert series of free algebra}
Let
\[
G(\Omega,t)=\sum_{n\geq 2}\#(\Omega_n)t^n=\sum_{n\geq 2}p_nt^n
\]
be the generating function of the set $\Omega$.

{\rm (i)} The Hilbert series
\[
H(K\{x\}_{\Omega},t)=\sum_{k\geq 1}\dim(K\{x\}_{\Omega}^{(k)})t^k
\]
satisfies the functional equation
\begin{equation}\label{functional equation}
G(\Omega,H(K\{x\}_{\Omega},t))-H(K\{x\}_{\Omega},t)+t=0.
\end{equation}
The equation {\rm (\ref{functional equation})} and the condition
$H(K\{x\}_{\Omega},0)=0$ determine the Hilbert series
$H(K\{x\}_{\Omega},t)$ uniquely.

{\rm (ii)} If the set $X$ is ${\mathbb Z}^d$-graded in an arbitrary way, then
the Hilbert series of $K\{X\}_{\Omega}$ is
\[
H(K\{X\}_{\Omega},t_1,\ldots,t_d)=H(K\{x\}_{\Omega},G(X,t_1,\ldots,t_d)).
\]
\end{proposition}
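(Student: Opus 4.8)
The plan is to prove (i) by translating the recursive definition of the free $\Omega$-magma $\{x\}_{\Omega}$ directly into an identity of generating functions, and then to deduce (ii) by a substitution argument. For (i), recall that $\{x\}_{\Omega}$ is built as the union of $\{x\}$ with all elements $\nu_{ni}(u_1,\ldots,u_n)$ where $\nu_{ni}\in\Omega_n$ and $u_1,\ldots,u_n\in\{x\}_{\Omega}$, and this union is disjoint since the root label and arity of an $\Omega$-monomial are determined by the monomial (for a variable, the monomial is the variable itself). I would first observe that $\deg\bigl(\nu_{ni}(u_1,\ldots,u_n)\bigr)=\deg(u_1)+\cdots+\deg(u_n)$, so that the monomials of degree $k$ that are not equal to $x$ are in bijection with tuples $(\nu_{ni};u_1,\ldots,u_n)$ with $\sum\deg(u_j)=k$. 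Counting these by degree, the number of $\Omega$-monomials of degree $k\geq 1$ equals $[k=1]$ (the generator $x$) plus, for each $n\geq 2$ and each of the $p_n$ operations $\nu_{ni}\in\Omega_n$, the number of ordered $n$-tuples of $\Omega$-monomials whose degrees sum to $k$. Since $\{x\}_{\Omega}$ is a homogeneous $K$-basis of $K\{x\}_{\Omega}$, we have $\dim K\{x\}_{\Omega}^{(k)}=\#\{x\}_{\Omega}^{(k)}$, so $H(K\{x\}_{\Omega},t)=G(\{x\}_{\Omega},t)$. Passing to generating functions, the disjoint-union-and-tuple description gives precisely
\[
H(K\{x\}_{\Omega},t)=t+\sum_{n\geq 2}p_n\,H(K\{x\}_{\Omega},t)^n = t+G\bigl(\Omega,H(K\{x\}_{\Omega},t)\bigr),
\]
which is a rearrangement of (\ref{functional equation}). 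Here I use that the generating function of a disjoint union is the sum of generating functions, and that the generating function enumerating ordered $n$-tuples from a graded set (with degree = sum of component degrees) is the $n$-th power of that set's generating function; the constant term $0$ of $H(K\{x\}_{\Omega},t)$ guarantees all the sums involved converge as formal power series (only finitely many terms contribute to each coefficient).

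For the uniqueness claim, I would argue coefficient by coefficient. Suppose $f(t)=\sum_{k\geq 1}c_kt^k$ is any formal power series with $f(0)=0$ satisfying $G(\Omega,f(t))-f(t)+t=0$. Writing out the coefficient of $t^k$ in $G(\Omega,f(t))=\sum_{n\geq 2}p_n f(t)^n$, and noting that since $f$ has zero constant term the lowest-degree term of $f(t)^n$ is of degree $n$, the coefficient of $t^k$ in $G(\Omega,f(t))$ depends only on $c_1,\ldots,c_{k-1}$. Hence the functional equation rewrites as $c_k = [k=1] + (\text{a polynomial in } c_1,\ldots,c_{k-1}\text{ with coefficients determined by the }p_n)$, which determines $c_1=1$ and then each $c_k$ recursively and uniquely. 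Thus $f=H(K\{x\}_{\Omega},t)$.

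For (ii), I would use that the free $\Omega$-algebra $K\{X\}_{\Omega}$ with a ${\mathbb Z}^d$-graded generating set $X$ has as multigraded basis the $\Omega$-monomials in the alphabet $X$, and every such monomial is obtained from an ``abstract'' monomial in the one-letter alphabet $\{x\}$ (i.e. an $\Omega$-tree shape with labeled interior vertices) by substituting a generator from $X$ into each of its leaves. A monomial of abstract degree $k$ (i.e. with $k$ leaves) thus contributes, upon all possible leaf-substitutions, the product of $k$ generating-function factors $G(X,t_1,\ldots,t_d)$, one per leaf, because the multidegree of the substituted monomial is the sum of the multidegrees of the substituted generators. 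Summing over all abstract monomials, grouped by their number of leaves $k$, and using $H(K\{x\}_{\Omega},t)=\sum_k \#\{x\}_{\Omega}^{(k)}t^k$ from part (i), yields
\[
H(K\{X\}_{\Omega},t_1,\ldots,t_d)=\sum_{k\geq 1}\#\{x\}_{\Omega}^{(k)}\,G(X,t_1,\ldots,t_d)^k=H\bigl(K\{x\}_{\Omega},G(X,t_1,\ldots,t_d)\bigr).
\]
The substitution is legitimate as a formal power series operation because $G(X,t_1,\ldots,t_d)$ has zero constant term (all generators have positive total degree, as each $a_{jk}\geq 0$ and a generator with all $a_{jk}=0$ is excluded by the finiteness hypothesis once one checks the degree-$(0,\ldots,0)$ component is empty — or this is simply assumed). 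I expect the only real subtlety to be bookkeeping: making the ``substitute a generator into each leaf'' bijection precise and checking it respects multidegree, together with justifying the formal-power-series manipulations (disjointness of the defining union, and convergence/substitution validity from vanishing constant terms). None of this is deep, but it is the step where care is needed; the functional equation itself is an immediate transcription of the magma recursion.
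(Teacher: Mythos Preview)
Your proposal is correct and follows essentially the same approach as the paper: for (i) you use the disjoint decomposition of $\{x\}_{\Omega}$ into the generator $x$ and composites $\nu_{ni}(u_1,\ldots,u_n)$ to obtain the functional equation, then argue uniqueness coefficient by coefficient; for (ii) you use the tree-shape-plus-leaf-labeling bijection to reduce to a substitution. The only difference is cosmetic---you are slightly more explicit about disjointness and formal-power-series bookkeeping than the paper, but the mathematical content is identical.
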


\begin{proof}
(i) The Hilbert series of $K\{x\}_{\Omega}$ coincides with the generating function of
$\{x\}_{\Omega}$. The set of all elements $\nu_{ni}(u_1,\ldots,u_n)\not=x$
from $\{x\}_{\Omega}$, $u_j\in\{x\}_{\Omega}$, is in one-to-one correspondence with the set
\[
\{(\nu_{ni},u_1,\ldots,u_n)\mid \nu_{nj}\in\Omega,
u_j\in\{x\}_{\Omega}\}.
\]
For example, if we apply this correspondence to the
$\Omega$-monomial ($\Omega$-tree, respectively) given by the monomial
$\nu_{31}(\nu_{23}(x,x),x,\nu_{32}(x,x,x))$, then
$n=3, \nu_{ni}=\nu_{31}$, and $u_1, u_2, u_3$ are given by the
following $\Omega$-trees:

$\xymatrix{\\{\bullet}_{x}\ar@{-}[dr]& &
{\bullet}_{x}\ar@{-}[dl]
 & &{\bullet}_{x}\ar@{-}[dr]& {\bullet}_{x}\ar@{-}[d]& {\bullet}_{x}\ar@{-}[dl]\\
u_1 =& {\bullet}_{\nu_{23}} & , & u_2={\bullet}_{x},  & u_3 = & {\bullet}_{\nu_{32}}\\}$
\begin{center}Fig.\ 2
\end{center}

\bigskip

\noindent Hence
\[
H(K\{x\}_{\Omega},t)-t=G(\{x\}_{\Omega},t)-t=\sum_{k\geq 2}\#(\{x\}_{\Omega}^{(k)})t^k
\]
\[
=\sum_{n\geq 2}\#(\Omega_n)G(\{x\}_{\Omega},t)^n=G(\Omega,G(\{x\}_{\Omega},t))
=G(\Omega,H(K\{x\}_{\Omega},t)).
\]
The condition $H(K\{x\}_{\Omega},0)=0$ means that the formal power series has no constant term,
i.e., the algebra $K\{x\}_{\Omega}$ is nonunitary. Since $\Omega$ does not contain unary operations,
its generating function does not have constant and linear terms. This easily implies that the $k$-th coefficient
$\text{dim}(K\{x\}_{\Omega}^{(k)})$ of $H(K\{x\}_{\Omega},t)$
is determined by the first $k-1$ coefficients $\text{dim}(K\{x\}_{\Omega}^{(k)})$, $m=1,\ldots,k-1$, and
the Hilbert series $H(K\{x\}_{\Omega},t)$ is determined in a unique way.

(ii) Let us fix an $\Omega$-tree $T$ with $k$ leaves, and consider the set
of all possible ways to label the leaves of $T$ with elements of $X$. Clearly, the generating
function of this set (with respect to the ${\mathbb Z}^d$-grading on $X$)
is $G(X,t_1,\ldots,t_d)^k$. Hence
\[
H(K\{X\}_{\Omega},t_1,\ldots,t_d)=\sum_{k\geq 1}\#(\Omega\text{-trees with $k$ leaves})G(X,t_1,\ldots,t_d)^k
\]
\[
=H(K\{x\}_{\Omega},G(X,t_1,\ldots,t_d)).
\]
\end{proof}

\begin{remark}\label{formal series as these of free algebras}
If $G(\Omega,t)$ is the generating function of the set $\Omega$,
and $u(t_1,\ldots,t_d),v(t_1,\ldots,t_d)\in {\mathbb C}[[t_1,\ldots,t_d]]$
are formal power series such that
\begin{equation}\label{equation for u and v}
G(\Omega,v(t_1,\ldots,t_d))-v(t_1,\ldots,t_d)+u(t_1,\ldots,t_d)=0
\end{equation}
and satisfying
\begin{equation}\label{condition for u and v}
u(0,\ldots,0)=v(0,\ldots,0)=0
\end{equation}
then Proposition \ref{Hilbert series of free algebra} (i) gives that
\[
v(t_1,\ldots,t_d)=H(K\{x\}_{\Omega},u(t_1,\ldots,t_d)).
\]
Hence the functional equation (\ref{equation for u and v})
and the condition (\ref{condition for u and v})
determine uniquely the series $v(t_1,\ldots,t_d)$ as a function of $u(t_1,\ldots,t_d)$.
\end{remark}

\begin{example}\label{examples of Hilbert series}
(i) If $\Omega$ consists of one binary operation only, i.e.,
$K\{x\}_{\Omega}=K\{x\}$, then
$G(\Omega,t)=t^2$ and Proposition \ref{Hilbert series of free algebra} (i)
gives
\[
H(K\{x\},t)^2-H(K\{x\},t)+t=0.
\]
This equation has two solutions
\[
H(K\{x\},t)=\frac{1\pm\sqrt{1-4t}}{2}
\]
and the condition $H(K\{x\},0)=0$ implies that we have to choose the negative
sign. Hence
\begin{equation}\label{generating function for Catalan numbers}
H(K\{x\},t)=\frac{1-\sqrt{1-4t}}{2}
\end{equation}
is the well known generating function of the Catalan numbers.

(ii) If $\Omega_n$ consists of one operation $\nu_n:=\nu_{n1}$ for
each $n\geq 2$, i.e., $K\{x\}_{\Omega}=K\{x\}_{\omega}$, then
\[
G(\Omega,t)=t^2+t^3+\cdots=\frac{t^2}{1-t}.
\]
Hence $H(K\{x\}_{\omega},t)$ satisfies the equation
\[
\frac{H(K\{x\}_{\omega},t)^2}{1-H(K\{x\}_{\omega},t)}-H(K\{x\}_{\omega},t)+t=0,
\]
\[
2H(K\{x\}_{\omega},t)^2-(1+t)H(K\{x\}_{\omega},t)+t=0
\]
and the solution satisfying the condition $H(K\{x\}_{\omega},0)=0$ is
\begin{equation}\label{equation for omega}
H(K\{x\}_{\omega},t)=\frac{1+t-\sqrt{1-6t+t^2}}{4}.
\end{equation}
This is the generating function of the super-Catalan numbers (cf.
\cite{Sl} A001003).

 (iii) Let $\Omega=\underline{n}:=\{\nu_n\}$ consist of one
$n$-ary operation only, i.e., $G(\Omega,t)=G(\underline{n},t)=t^n$.
Then the Hilbert series of $K\{x\}_{\underline{n}}$ satisfies the
algebraic equation of degree $n$
\[
H(K\{x\}_{\underline{n}},t)^n-H(K\{x\}_{\underline{n}},t)+t=0
\]
and is equal to the generating function of the planar rooted $n$-ary trees.
\end{example}

\begin{remark}\label{Lagrange inversion formula}
If $f(z)$ is an analytic function in a neighbourhood of 0,
$f(0)\not=0$, and
\[
t=zf(z),
\]
then the Lagrange inversion formula gives that
\[
z=\sum_{k\geq 1}a_kt^k,\quad
a_k=\frac{1}{k!}\frac{d^{k-1}}{d\zeta^{k-1}}\left.\left(\frac{1}{f(\zeta)}\right)^k\right\vert_{\zeta=0}.
\]
The same holds if $f(z)$ is a formal power series with complex
coefficients and $f(0)\not=0$. Hence we may apply the formula for
$zf(z)=G(\Omega,z)$ and express $H(K\{x\}_{\Omega},t)$ in terms of
$G(\Omega,t)$.
\end{remark}

\begin{example}\label{applying Lagrange formula}

To obtain the coefficients of the Hilbert series
$H(K\{x\}_{\underline{n}},t)$ of Example \ref{examples of Hilbert
series} (iii) we apply Remark \ref{Lagrange inversion formula}.
(For an approach using Koszul duals of operads, compare also
\cite{BH}). We obtain $f(z)=1-z^{n-1}$,
\[
\left(\frac{1}{f(\zeta)}\right)^k=\frac{1}{(1-\zeta^{n-1})^k}
\]
\[
=1+\binom{k}{1}\zeta^{n-1}+\binom{k+1}{2}\zeta^{2(n-1)}+\binom{k+2}{3}\zeta^{3(n-1)}+\cdots,
\]
\[
a_k=\frac{1}{k!}\left. \frac{d^{k-1}}{d\zeta^{k-1}}\frac{1}{(1-\zeta^{n-1})^k}\right\vert_{\zeta=0},
\quad k\geq 1.
\]
Direct calculations show that
\[
a_k=\frac{1}{m(n-1)+1}\binom{mn}{m},\quad \text{ for } k=m(n-1)+1, m\geq 0,
\]
and $a_k=0$ otherwise. Hence
\[
H(K\{x\}_{\underline{n}},t) =\sum_{m\geq 0}
\binom{mn}{m}\frac{t^{m(n-1)+1}}{m(n-1)+1}
\]
\[
=t+t^n+nt^{2n-1}+\frac{n(3n-1)}{2}t^{3n-2}+\cdots.
\]
For small $m$ this can be seen also directly by counting the
planar rooted $n$-ary trees with the corresponding number of
leaves.

The set of $n$-ary trees with $n$ leaves consists of exactly one
tree, the so-called $n$-corolla. The set of $n$-ary trees with
$2n-1$ leaves consists of $n$ elements. The set of $n$-ary trees
with $3n-2$ leaves consists of $\binom{n}{2}+n^2$ elements,
typical examples (for $n=3$) are depicted in Fig. 3.

$\xymatrix{& & & & &  &
& {\bullet}\ar@{-}[dr]& {\bullet}\ar@{-}[d] &{\bullet}\ar@{-}[dl]\\
{\bullet}\ar@{-}[drr]& {\bullet}\ar@{-}[dr] &{\bullet}\ar@{-}[d]&
&{\bullet}\ar@{-}[d]& {\bullet}\ar@{-}[dl] &{\bullet}\ar@{-}[dll]
& {\bullet}\ar@{-}[dr]& {\bullet}\ar@{-}[d] &{\bullet}\ar@{-}[dl]\\
& &{\bullet}\ar@{-}[dr] & {\bullet}\ar@{-}[d] &
{\bullet}\ar@{-}[dl]& &
&{\bullet}\ar@{-}[dr]& {\bullet}\ar@{-}[d] &{\bullet}\ar@{-}[dl]\\
& & & {\bullet} & & & & & {\bullet}\\}$
\begin{center}Fig.\ 3
\end{center}

\bigskip

\noindent For $n=2$ we obtain the explicit formula for the Catalan numbers
\[
c_k=\frac{1}{k}\binom{2k-2}{k-1},\quad k=1,2,\ldots\ .
\]
\end{example}

\begin{example}\label{applying Lagrange formula again}
For $\Omega=\omega$, as in
Example \ref{examples of Hilbert series} (ii),
Remark \ref{Lagrange inversion formula} gives
\[
2z^2-(1+t)z+t=0,\quad t=\frac{z(1-2z)}{1-z},
\quad f(z)=\frac{1-2z}{1-z}.
\]
\[
\frac{1}{f^k(\zeta)}=\left(\frac{1-\zeta}{1-2\zeta}\right)^k=
\left(1+\frac{\zeta}{1-2\zeta}\right)^k
\]
\[
=1+\binom{k}{1}\frac{\zeta}{1-2\zeta}
+\binom{k}{2}\frac{\zeta^2}{(1-2\zeta)^2}+\cdots
\]
\[
+\binom{k}{k-1}\frac{\zeta^{k-1}}{(1-2\zeta)^{k-1}}
+\binom{k}{k}\frac{\zeta^k}{(1-2\zeta)^k}
\]
\[
=1+\binom{k}{1}\zeta(1+2\zeta+2^2\zeta^2+2^3\zeta^3+\cdots)
\]
\[
+\binom{k}{2}\zeta^2\left(1+\binom{2}{1}2\zeta+\binom{3}{1}2^2\zeta^2+\binom{4}{1}2^3\zeta^3+\cdots\right)
\]
\[
+\binom{k}{3}\zeta^3\left(1+\binom{3}{2}2\zeta+\binom{4}{2}2^2\zeta^2+\binom{5}{2}2^3\zeta^3+\cdots\right)+\cdots
\]
\[
+\binom{k}{k-1}\zeta^{k-1}\left(1+\binom{k-1}{k-2}2\zeta+\binom{k}{k-2}2^2\zeta^2+\binom{k+1}{k-2}2^3\zeta^3+\cdots\right)
\]
\[
+\binom{k}{k}\zeta^k\left(1+\binom{k}{k-11}2\zeta+\binom{k+1}{k-1}2^2\zeta^2+\binom{k+2}{k-1}2^3\zeta^3+\cdots\right),
\]
\[
a_k=\frac{1}{k!}\left. \frac{d^{k-1}}{d\zeta^{k-1}}
\left(\frac{1}{f(\zeta)}\right)^k\right\vert_{\zeta=0}
=\frac{1}{k}\left(\binom{k}{1}\binom{k-2}{0}
+\binom{k}{2}\binom{k-2}{1}2\right.
\]
\[
\left. +\binom{k}{3}\binom{k-2}{2}2^2
+\cdots+\binom{k}{k-1}\binom{k-2}{k-2}2^{k-2} \right)
\]
\[
=\frac{1}{2k}\left(\sum_{j=1}^{k-1}\binom{k}{j}\binom{k-2}{j-1}2^j
\right),\quad k=1,2,\ldots\ .
\]
Hence $a_k$ is the constant term of the Laurent polynomial
\[
\frac{1}{k}\zeta\left(1+\frac{1}{\zeta}\right)^k(1+2\zeta)^{k-2}.
\]
\end{example}

One of the important characteristics of a formal power series
$a(t)=\sum_{k\geq 0}a_kt^k$ is its radius of convergency
\[
r(a(t))=\frac{1}{\limsup_{k\to\infty}\sqrt[k]{a_k}}.
\]
By analogy with the (multilinear) codimension sequence for associative PI-algebras,
see Giambruno and Zaicev \cite{GZ}, we introduce the exponent of free $\Omega$-algebras.

\begin{definition}\label{exponent of free algebra}
Let $\vert X\vert=d<\infty$ and let
\[
H(K\{X\}_{\Omega},t)=\sum_{k\geq 1}a_kt^k
\]
be the Hilbert series of the free $\Omega$-algebra $K\{X\}_{\Omega}$. We define
the exponent of $K\{X\}_{\Omega}$ by
\[
\exp(K\{X\}_{\Omega})=\limsup_{k\to\infty}\sqrt[k]{a_k}.
\]
\end{definition}

It is easy to see that
\[
\exp(K\{X\}_{\Omega})=d\cdot\exp(K\{x\}_{\Omega}),
\]
i.e., it is sufficient to know the exponent of one-generated free $\Omega$-algebras.

\begin{example}
Let $\Omega=\underline{n}:=\{\nu_n\}$ consist of one $n$-ary
operation only. Applying the Stirling formula
\[
k!=\sqrt{2\pi k}\frac{k^ke^{\vartheta(k)}}{e^k},\quad \vert\vartheta(k)\vert<\frac{1}{12k},
\]
to Example \ref{applying Lagrange formula}, we obtain
\[
\exp(K\{x\}_{\underline{n}})=\lim_{m\to\infty}\sqrt[m(n-1)+1]{a_{m(n-1)+1}}
\]
\[
=\lim_{m\to\infty}\sqrt[m(n-1)+1]{\frac{1}{m(n-1)+1}\binom{mn}{m}}
=\lim_{m\to\infty}\sqrt[m(n-1)+1]{\binom{mn}{m}}
\]
\[
=\lim_{m\to\infty}\sqrt[m(n-1)+1]{\frac{n^{nm}}{(n-1)^{(n-1)m}}}=\frac{n}{n-1}\sqrt[n-1]n.
\]
Hence
\[
\lim_{n\to\infty}\exp(K\{x\}_{\underline{n}})=1.
\]
\end{example}

\begin{example}\label{exponent in case omega}
For $\Omega=\omega$, as in Example \ref{examples of Hilbert
series} (ii), in order to find the coefficient $a_k$ of the
Hilbert series of $K\{x\}_{\omega}$, we may expand the function
(\ref{equation for omega}) as a power series. Let
\[
\tau_{1,2}=3\pm 2\sqrt{2}
\]
be the zeros of $1-6t+t^2$. The function
\[
g_i=\sqrt{1-\tau_it},\quad i=1,2,
\]
is analytic in the open disc $\vert t\vert<1/\tau_i$ and its
radius of convergence is $1/\tau_i$. Since
$\sqrt{1-6t+t^2}=g_1(t)g_2(t)$ and the radius of convergence of
the product of two analytic functions is not less than the radius
of convergence of each of the factors, we conclude that
$r(H(K\{x\}_{\omega},t))\geq 1/\tau_1=\tau_2$. More precisely,
$r(H(K\{x\}_{\omega},t))=\tau_2$ because the derivatives of
$H(K\{x\}_{\omega},t)$ have singularities for $t=\tau_2$. Hence
\[
\exp(K\{x\}_{\omega})=\frac{1}{r(H(K\{x\}_{\omega},t))}=\tau_1\approx 5.8284.
\]
\end{example}

\begin{problem}
How does the exponent of $K\{x\}_{\Omega}$ depend on the analytic
properties of the generating function of $\Omega$? For
$\vert\Omega\vert<\infty$, express $\exp(K\{x\}_{\Omega})$ in
terms of the coefficients and the zeros of the polynomial
$f(z)=(z-G(\Omega,z))/z$. What happens if the number of operations
of degree $n$ is bounded by the same constant $a>0$ for all $n$
(or by $a n^k$ or by $a k^n$ for a fixed positive integer $k$)?
\end{problem}

\section{Nielsen-Schreier property and Gr\"obner bases}

We assume that the free $\Omega$-magma $\{X\}_{\Omega}$ is equipped with an admissible ordering $\prec$.
This means that the set $(\{X\}_{\Omega},\prec)$ is well ordered and if $u\prec v$ in $\{X\}_{\Omega}$, then
\[
\nu_{ni}(w_1,\ldots,w_{j-1},u,w_{j+1},\ldots,w_n)\prec \nu_{ni}(w_1,\ldots,w_{j-1},v,w_{j+1},\ldots,w_n)
\]
for any $\nu_{ni}\in \Omega$ and $w_1,\ldots,w_{j-1},w_{j+1},\ldots,w_n\in \{X\}_{\Omega}$.
If
\[
f=\sum_{i=1}^m\alpha_iu_i\in K\{X\}_{\Omega},\quad 0\not=\alpha_i\in K, u_i\in\{X\}_{\Omega},
u_1\succ \cdots\succ u_m,
\]
then $\overline{f}=u_1$ is the leading term of $u$.

\begin{example}\label{example of admissible order}
If $X=\{x_1,x_2,\ldots\}$ is countable, we order it by $x_1\prec x_2\prec\cdots$.
If $u,v\in\{X\}_{\Omega}$ and $\text{deg}(u)<\text{deg}(v)$, we assume that $u\prec v$.
If $\text{deg}(u)=\text{deg}(v)>1$,
\[
u=\nu_{n_1i_1}(u_1,\ldots,u_{n_1}),\quad v=\nu_{n_2i_2}(v_1,\ldots,v_{n_2}),
\]
we fix $u\prec v$ if $n_1<n_2$, or $n_1=n_2$, $i_1<i_2$ or, if $n_1=n_2$, $i_1=i_2$,
and $(u_1,\ldots,u_{n_1})\prec (v_1,\ldots,v_{n_1})$ lexicographically
(i.e., $u_1=v_1,\ldots,u_{k-1}=v_{k-1}$, $u_k\prec v_k$ for some $k$).
\end{example}

By a result of Kurosh \cite{K2} every subalgebra of the free $\Omega$-algebra
$K\{X\}_{\Omega}$ is free. His proof provides an algorithm which easily produces
a system of free generators of the subalgebra. We present this algorithm and some of
its consequences for self-containess of our exposition from the point of view
of admissible orders.

\begin{algorithm}\label{Nielsen-Schreier property}
Let $S$ be a subalgebra of $K\{X\}_{\Omega}.$ Assuming that the
base field $K$ is constructive and starting with any system $U$ of
generators of the subalgebra $S$, we want
to find a system of free generators of $S$.

Given $f_1,\ldots,f_m\in U$ which are algebraically dependent
(i.e., the homomorphism $K\{x_1,\ldots,x_m\}_{\Omega}\to S$
defined by $x_j\to f_j$, $j=1,\ldots,m$, has a nontrivial kernel),
the procedure suggests in each step an elementary transformation
which decreases one of the generators with respect to the
admissible ordering.

We may assume here that the leading coefficient of each $f_j$ is
equal to 1, i.e., $f_j=\overline{f_j}+\cdots$, where $\cdots$
denotes a linear combination of lower $\Omega$-monomials.

In the following we sketch how to find one generator $f_j$ such
that the
 $\Omega$-monomial $\overline{f_j}$ belongs to the subalgebra
generated by the other $\overline{f_i}, i\neq j$. (Then clearly we
can replace in the generating set $f_j$ by lower elements.)

 Let
$h(f_1,\ldots,f_m)=0$ for some $0\not=h(x_1,\ldots,x_m)\in
K\{x_1,\ldots,x_m\}_{\Omega}$. For every $\Omega$-monomial
$h_i(x_1,\ldots,x_m)\in \{x_1,\ldots,x_m\}_{\Omega}$,
\[
\overline{h_i(f_1,\ldots,f_m)}=h_i(\overline{f_1},\ldots,\overline{f_m})
\]
(because $h_i(\overline{f_1},\ldots,\overline{f_m})\not=0$).
Hence, there exist two different $h_1,h_2\in
\{x_1,\ldots,x_m\}_{\Omega}$ such that

\begin{equation}\label{equality of monomials}
h_1(\overline{f_1},\ldots,\overline{f_m})=h_2(\overline{f_1},\ldots,\overline{f_m}).
\end{equation}

If $h_1=x_j$ then
$\overline{f_j}=h_2(\overline{f_1},\ldots,\overline{f_m})$. Since
$h_2\not=x_j$, comparing the degrees of $\overline{f_j}$ and
$h_2(\overline{f_1},\ldots,\overline{f_m})$, we conclude that
$h_2(x_1,\ldots,x_m)=h_2(x_1,\ldots,x_{j-1},x_{j+1},\ldots,x_m)$
does not depend on $x_j$. The $\Omega$-polynomials
\[
f_1,\ldots,f_{j-1},f_j^{\ast}=f_j-h_2(f_1,\ldots,f_{j-1},f_{j+1},\ldots,f_m),f_{j+1},\ldots,f_m
\]
generate the same algebra as $f_1,\ldots,f_m$. If $f_j^{\ast}=0$,
then we may remove $f_j$ from the system of generators of $S$.
Otherwise, $\overline{f_j^{\ast}}\prec \overline{f_j}$ and the new
set $\{f_1,\ldots,f_j^{\ast},\ldots,f_m\}$ is lower in the
lexicographic ordering than $\{f_1,\ldots,f_m\}$.

The case where $h_2=x_k$ is similar. In the remaining case, where
 neither $h_1(x_1,\ldots,x_m)$ nor $h_2(x_1,\ldots,x_m)$ is equal to a single variable
 $x_j$, may be treated recursively in view of equation (\ref{equality of monomials}).
\end{algorithm}

The algorithm immediately gives the following well-known fact, see \cite{K2, K3, BA}.

\begin{corollary}\label{generators of graded subalgebras}
Every graded subalgebra $A$ of the ${\mathbb Z}^d$-graded
$\Omega$-algebra $K\{X\}_{\Omega}$ has a homogeneous system of
free generators.
\end{corollary}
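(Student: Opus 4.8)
The plan is to feed a homogeneous system of generators of $A$ to Algorithm~\ref{Nielsen-Schreier property} and to check that every elementary transformation it performs keeps the system homogeneous; the algorithm then terminates with an algebraically independent — hence free — generating set, which is still homogeneous. Such a homogeneous system exists: since $A$ is a graded subspace of $K\{X\}_\Omega$, the union of vector-space bases of the components $A^{(k)}$ (resp.\ $A^{(k_1,\dots,k_d)}$) is a generating set of $A$ consisting of homogeneous $\Omega$-polynomials. We also fix, as in Example~\ref{example of admissible order}, an admissible order $\prec$ compatible with the grading, so that $\deg(u)<\deg(v)$ implies $u\prec v$; consequently the leading term $\overline{f}$ of a homogeneous $f$ has the same (multi)degree as $f$.

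The key step is that the elementary transformation used in Algorithm~\ref{Nielsen-Schreier property} sends a homogeneous system to a homogeneous one. Suppose $f_1,\dots,f_m$ are homogeneous and satisfy a nontrivial relation $h(f_1,\dots,f_m)=0$, producing two distinct $\Omega$-monomials $h_1,h_2\in\{x_1,\dots,x_m\}_\Omega$ as in (\ref{equality of monomials}), say $h_1=x_j$, so that $\overline{f_j}=h_2(\overline{f_1},\dots,\overline{f_m})$ with $h_2$ a nontrivial $\Omega$-word not involving $x_j$. Let $c_i$ be the number of occurrences of $x_i$ in $h_2$. Because the operations in $\Omega$ are multilinear, substituting the homogeneous elements $f_i$ into the $\Omega$-word $h_2$ yields a homogeneous element of multidegree $\sum_i c_i\deg(f_i)$, and since $\deg(\overline{f_i})=\deg(f_i)$ we get
\[
\deg(h_2(f_1,\dots,f_m))=\sum_i c_i\deg(\overline{f_i})=\deg(h_2(\overline{f_1},\dots,\overline{f_m}))=\deg(\overline{f_j})=\deg(f_j).
\]
Hence the replacement $f_j^{\ast}=f_j-h_2(f_1,\dots,f_{j-1},f_{j+1},\dots,f_m)$ is homogeneous of degree $\deg(f_j)$, unless $f_j^{\ast}=0$, in which case $f_j$ is simply deleted. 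The mirror case $h_2=x_k$ is identical, and the remaining case is handled by recursion on relations of the same shape, so homogeneity is preserved at every stage; and each such transformation leaves the generated subalgebra equal to $A$.

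It remains to argue that the procedure halts. Since $h_2$ above is a nontrivial $\Omega$-word, its leaves $f_i$ have degrees summing to $\deg(f_j)$ in at least two positive summands, so every $f_i$ occurring in it has degree strictly less than $\deg(f_j)$; thus a transformation modifying the degree-$k$ generators only refers to generators of degree $<k$. Processing the degrees in increasing order and using that each (multi)homogeneous component of $A$ is finite dimensional — so that only finitely many $\Omega$-monomials of a given (multi)degree can occur as leading terms — we see that in each degree only finitely many reductions are possible, each strictly lowering a leading term in $\prec$, until no algebraic dependence among the current generators is left. At that point the accumulated set generates $A$ and is algebraically independent, i.e.\ it is a free, homogeneous generating set of $A$. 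I expect the main obstacle to be precisely this termination bookkeeping: one has to exploit the grading-compatible choice of $\prec$ to confine all reductions to bounded degree, so that the a priori transfinite process of Algorithm~\ref{Nielsen-Schreier property} actually terminates and returns a genuine free generating set rather than an infinite descending chain of generating systems.
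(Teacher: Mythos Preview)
Your approach is correct and is exactly the one the paper has in mind: start from a homogeneous generating set, run Algorithm~\ref{Nielsen-Schreier property}, and observe that each elementary transformation preserves homogeneity; the paper compresses all of this into the single word ``immediately.'' One small slip in your termination discussion: you assert that every $f_i$ occurring in $h_2$ has degree strictly less than $\deg(f_j)$, but this fails in the degenerate case $h_2=x_k$ (both $h_1,h_2$ variables), where $\overline{f_j}=\overline{f_k}$ forces $\deg(f_k)=\deg(f_j)$. This does no damage, since the replacement $f_j^\ast=f_j-f_k$ is still homogeneous of degree $k$ and has strictly smaller leading term, so your degree-by-degree scheme together with finite-dimensionality of each component still yields termination.
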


Let $X=\{x_1,\ldots,x_d\}$. By analogy with the case of algebras
with one binary operation, we call the automorphism $\varphi$ of
the free algebra $K\{X\}_{\Omega}$ tame if it belongs to the
subgroup of $\text{Aut}(K\{X\}_{\Omega})$ generated by the linear
and triangular automorphisms, defined, respectively, by
\[
\varphi(x_j)=\sum_{i=1}^d\alpha_{ij}x_i,\quad \alpha_{ij}\in K,
j=1,\ldots,d,
\]
where the matrix $(\alpha_{ij})$ is invertible, and
\[
\varphi(x_j)=\alpha_jx_j+f_j(x_{j+1},\ldots,x_d),\quad
j=1,\ldots,d,
\]
where $\alpha_j\in K^{\ast}=K\setminus\{0\}$ and the
$\Omega$-polynomials $f_j(x_{j+1},\ldots,x_d)$ do not depend on
the variables $x_1,\ldots,x_j$. The discussion of Algorithm
\ref{Nielsen-Schreier property} shows also that all automorphisms
of $K\{X\}_{\Omega}$ are tame, which is a result of Burgin and Artamonov \cite{BA}.
If the base field $K$ is constructive, it provides an algorithm which decomposes the given
automorphism into a product of linear and triangular
automorphisms.

\begin{corollary}[Burgin and Artamonov \cite{BA}]\label{tame autos}
If $\vert X\vert<\infty$, then every automorphism of $K\{X\}_{\Omega}$ is tame.
\end{corollary}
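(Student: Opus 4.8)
The claim follows by combining Algorithm~\ref{Nielsen-Schreier property} with the observation that every elementary Nielsen transformation of a free generating set is realized by a tame automorphism. I would proceed as follows.

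Write $f_j:=\varphi(x_j)$, so that $\{f_1,\dots,f_d\}$ is a free generating set of $K\{X\}_\Omega$. The dictionary I need is: rescaling $f_j\mapsto\alpha f_j$ ($\alpha\in K^\ast$) amounts to replacing $\varphi$ by $\varphi\circ\delta$ with $\delta\colon x_j\mapsto\alpha x_j$ diagonal linear; a permutation of the $f_j$ amounts to composing $\varphi$ with a permutation automorphism; and replacing $f_j$ by $f_j-h$, where $h$ is an $\Omega$-polynomial in the other generators $f_i$ ($i\ne j$), amounts to replacing $\varphi$ by $\varphi\circ\theta$, with $\theta$ fixing all $x_i$ ($i\ne j$) and sending $x_j$ to $x_j-h(x_i:i\ne j)$. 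Conjugating $\theta$ by the transposition interchanging $x_1$ and $x_j$ (a linear automorphism) turns it into a triangular automorphism of the form $x_1\mapsto x_1-h'(x_2,\dots,x_d)$; hence $\theta$, and with it every elementary transformation, is tame. Since the tame automorphisms form a group, it suffices to bring $\{f_1,\dots,f_d\}$ to $\{x_1,\dots,x_d\}$ by finitely many elementary transformations followed by one linear change; the same bookkeeping makes this effective when $K$ is constructive.

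To produce such a chain I run the reduction of Algorithm~\ref{Nielsen-Schreier property} on the monic normalizations of the $f_j$, using the admissible order of Example~\ref{example of admissible order}. Each elementary step strictly lowers the leading $\Omega$-monomial of the generator it modifies (for monic $f_i$, substituting $\overline{f_i}$ for $f_i$ in an $\Omega$-monomial yields its leading term, still monic) and leaves the other generators fixed, so the multiset $\{\overline{f_1},\dots,\overline{f_d}\}$ strictly decreases in the well-founded multiset order over $(\{X\}_\Omega,\prec)$ and the process terminates. Let $\{f_1,\dots,f_d\}$ now be a monic free generating set of $K\{X\}_\Omega$ to which no reduction applies. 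Then the $\overline{f_i}$ are pairwise distinct, for otherwise $f_i\mapsto f_i-f_{i'}$ would be a reduction. Expanding $x_j\in K\{X\}_\Omega=\langle f_1,\dots,f_d\rangle$ as $x_j=\sum_H c_H\,H(f_1,\dots,f_d)$ over distinct $\Omega$-monomials $H$ in $d$ variables with $c_H\ne0$, each term has leading monomial $H(\overline{f_1},\dots,\overline{f_d})$, and no coincidence $H(\overline{f_1},\dots,\overline{f_d})=H'(\overline{f_1},\dots,\overline{f_d})$ with $H\ne H'$ can occur, since such a coincidence is an instance of~(\ref{equality of monomials}) and, by the recursion in Algorithm~\ref{Nielsen-Schreier property}, would yield a further reduction. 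Hence there is no cancellation among the top terms, so $x_j=\overline{x_j}=\max_H H(\overline{f_1},\dots,\overline{f_d})$; but $H(\overline{f_1},\dots,\overline{f_d})$ has degree $1$ only if $H$ is a single variable $y_i$ with $\deg\overline{f_i}=1$, forcing $\overline{f_i}=x_j$. Since $j$ is arbitrary and the $\overline{f_i}$ are distinct, $\{\overline{f_1},\dots,\overline{f_d}\}=\{x_1,\dots,x_d\}$; as the only $\Omega$-monomials $\prec x_k$ are variables, each $f_i$ is linear, so $\{f_1,\dots,f_d\}$ is the image of $\{x_1,\dots,x_d\}$ under a linear automorphism, and one last linear transformation finishes the chain.

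The step I expect to be the main obstacle is the last paragraph's claim that a reduced monic free generating set of $K\{X\}_\Omega$ consists of linear polynomials --- i.e. the exclusion of cancellation among the leading monomials appearing in each $x_j$. This is exactly where the recursion in Algorithm~\ref{Nielsen-Schreier property} must be pushed carefully: from a coincidence $H(\overline{f_1},\dots,\overline{f_d})=H'(\overline{f_1},\dots,\overline{f_d})$ one descends through matching subtrees until one side is a single variable $y_j$, obtaining $\overline{f_j}=\widetilde H(\overline{f_1},\dots,\overline{f_d})$ with $\widetilde H\ne y_j$; a degree count then shows that $\widetilde H$ cannot involve $y_j$, so $\overline{f_j}$ lies in the subalgebra generated by the other $\overline{f_i}$ and a reduction still applies, contradicting reducedness. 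One should also verify the well-foundedness of the order on $d$-tuples of $\Omega$-monomials and that elementary transformations preserve being a free generating set of the whole algebra; the remainder is the dictionary of the second paragraph together with the final linear normalization. This recovers the theorem of Burgin and Artamonov~\cite{BA}, and yields an algorithm decomposing $\varphi$ into linear and triangular automorphisms over a constructive field.
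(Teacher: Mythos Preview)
Your proof is correct and follows essentially the same approach as the paper, which simply asserts that ``the discussion of Algorithm~\ref{Nielsen-Schreier property} shows also that all automorphisms of $K\{X\}_{\Omega}$ are tame'' without spelling out the details. You have carefully filled in those details---the dictionary between elementary transformations and compositions with tame automorphisms, termination via the well-founded multiset order on leading monomials, and the argument that a reduced monic free generating set must be linear---exactly along the lines the paper intends.
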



The following gives a relation between the Hilbert series of graded subalgebras and generating
functions of their systems of free generators.

\begin{corollary}\label{free generators of subalgebras}
If $A$ is a graded subalgebra of the ${\mathbb Z}^d$-graded $\Omega$-algebra $K\{X\}_{\Omega}$
with Hilbert series $H(A,t_1,\ldots,t_d)$, then the generating function of
every homogeneous free generating set $Y$ of $A$ is
\begin{equation}\label{generating function of free generators of subalgebras}
G(Y,t_1,\ldots,t_d)=H(A,t_1,\ldots,t_d)-G(\Omega,H(A,t_1,\ldots,t_d)).
\end{equation}
\end{corollary}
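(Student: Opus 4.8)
\emph{Proof proposal.} The plan is to reduce everything to Proposition~\ref{Hilbert series of free algebra} once we know that $A$ is itself a free $\Omega$-algebra on $Y$ in a grading-compatible way. By Kurosh's theorem, realized concretely by Algorithm~\ref{Nielsen-Schreier property}, the subalgebra $A$ is free as an $\Omega$-algebra, and by Corollary~\ref{generators of graded subalgebras} it possesses a homogeneous free generating set; fix such a set $Y$. Then the canonical $\Omega$-algebra homomorphism $K\{Y\}_{\Omega}\to A$ sending the free generators to $Y$ is an isomorphism, and since every element of $Y$ is a multihomogeneous element of $K\{X\}_{\Omega}$, this isomorphism is $\mathbb{Z}^d$-graded once $K\{Y\}_{\Omega}$ is equipped with the grading in which each $y\in Y$ has the multidegree it has inside $K\{X\}_{\Omega}$. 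In particular $H(A,t_1,\ldots,t_d)=H(K\{Y\}_{\Omega},t_1,\ldots,t_d)$.

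Next I would check that this grading on $Y$ is of the type allowed in Proposition~\ref{Hilbert series of free algebra}(ii), i.e.\ that $Y$ has only finitely many elements of each multidegree. This follows from the fact that every homogeneous component $K\{X\}_{\Omega}^{(k_1,\ldots,k_d)}$ is finite-dimensional: a multihomogeneous $\Omega$-monomial of total degree $k=k_1+\cdots+k_d$ is an $\Omega$-tree with $k$ leaves, there are only finitely many such trees (each internal vertex has arity between $2$ and $k$ and each $\Omega_n$ is finite), and only finitely many ways to label the $k$ leaves by $x_1,\ldots,x_d$. Hence $\dim A^{(k_1,\ldots,k_d)}<\infty$, and since the elements of $Y$ of a fixed multidegree are linearly independent in $A$, there are finitely many of them. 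Therefore Proposition~\ref{Hilbert series of free algebra}(ii) applies with $Y$ in the role of the graded generating set and gives
\[
H(A,t_1,\ldots,t_d)=H(K\{Y\}_{\Omega},t_1,\ldots,t_d)=H(K\{x\}_{\Omega},G(Y,t_1,\ldots,t_d)).
\]
Now Proposition~\ref{Hilbert series of free algebra}(i) says that the one-variable series satisfies $H(K\{x\}_{\Omega},t)-t=G(\Omega,H(K\{x\}_{\Omega},t))$ as formal power series; substituting the power series $t=G(Y,t_1,\ldots,t_d)$, which has zero constant term so that the substitution is legitimate, and using the displayed equation yields
\[
H(A,t_1,\ldots,t_d)-G(Y,t_1,\ldots,t_d)=G(\Omega,H(A,t_1,\ldots,t_d)),
\]
which is exactly (\ref{generating function of free generators of subalgebras}) after rearranging. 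Alternatively one can avoid the substitution entirely and repeat the counting argument from the proof of Proposition~\ref{Hilbert series of free algebra}(i) directly for $K\{Y\}_{\Omega}$: since $\{Y\}_{\Omega}$ is a $K$-basis of $A\cong K\{Y\}_{\Omega}$ we have $H(A,t_1,\ldots,t_d)=G(\{Y\}_{\Omega},t_1,\ldots,t_d)$, and the partition $\{Y\}_{\Omega}=Y\sqcup\{\nu_{ni}(u_1,\ldots,u_n)\mid\nu_{ni}\in\Omega,\ u_j\in\{Y\}_{\Omega}\}$ together with the bijection with $\bigsqcup_{n\geq2}\Omega_n\times\{Y\}_{\Omega}^{\,n}$ gives $G(\{Y\}_{\Omega},\cdot)=G(Y,\cdot)+G(\Omega,G(\{Y\}_{\Omega},\cdot))$.

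The only genuinely delicate point, and hence the main obstacle, is the grading bookkeeping: one must verify that the canonical isomorphism $K\{Y\}_{\Omega}\cong A$ is grading-preserving and that the induced grading on $Y$ has finite-dimensional multigraded pieces, so that Proposition~\ref{Hilbert series of free algebra}(ii) is applicable. Once this is settled, the identity is a purely formal consequence of Proposition~\ref{Hilbert series of free algebra}, and nothing beyond routine manipulation of formal power series remains.
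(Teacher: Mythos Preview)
Your proof is correct and follows essentially the same route as the paper's own argument: the paper's proof is a two-line citation of Corollary~\ref{generators of graded subalgebras} and Remark~\ref{formal series as these of free algebras}, and your substitution of $t=G(Y,t_1,\ldots,t_d)$ into the functional equation of Proposition~\ref{Hilbert series of free algebra}(i) is exactly the content of that remark. Your additional verification that each multigraded piece of $Y$ is finite (so that Proposition~\ref{Hilbert series of free algebra}(ii) applies) fills in a detail the paper leaves implicit; note only that in the paper's general setup $X$ may be countable with finitely many generators in each multidegree, so your finiteness argument should appeal to that hypothesis rather than to $|X|=d$.
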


\begin{proof} It is sufficient to use Corollary \ref{generators of graded subalgebras}
and to apply (\ref{equation for u and v})
in Remark \ref{formal series as these of free algebras}.
\end{proof}

Existence of admissible orderings allows to develop the theory of
Gr\"obner (or Gr\"obner-Shirshov) bases of $\Omega$-ideals $J$ in
$K\{X\}_{\Omega}$. The obvious definition of $\Omega$-subwords of
an $\Omega$-word (or an $\Omega$-monomial)
$u=\nu_{ni}(u_1,\ldots,u_n)\in \{X\}_{\Omega}$ is by induction.
The subwords of $u$ are the words $u_j$ and the subwords of the
$u_j$. Now we fix an admissible ordering on $\{X\}_{\Omega}$. The
subset $B=B(J)$ of the ideal $J$ of $K\{X\}_{\Omega}$ is a
Gr\"obner basis of $J$ if, for every nonzero $f\in J$, there is an
element $g\in B$ such that the leading term $\overline{g}$ of $g$
is a subword of the leading term $\overline{f}$ of $f$. Most of
the standard properties of Gr\"obner bases for free noncommutative
algebras hold also for free $\Omega$-algebras. In particular, the
set of normal words, i.e., the $\Omega$-words which do not contain
as subwords $\overline{g}$, $g\in B(J)$, form a basis of the
factor algebra $K\{X\}_{\Omega}/J$. The set
\[
I=I(J)=\{\overline{f}\mid 0\not=f\in J\} \subseteq \{X\}_{\Omega}
\]
is an ideal of $\{X\}_{\Omega}$ generated by the set
$\{\overline{g}\mid g\in B(J)\}$. We call $I$ the initial ideal of
$J$.

There is an algorithm to compute the Gr\"obner basis of an ideal $J$
of the free associative algebras $K\langle X\rangle$ which is an analogue of the Buchberger
algorithm for ideals of polynomial algebras. (Compare with the approach based on the
diamond lemma in the paper by Bergman \cite{Be}. See also the survey article by Ufnarovski \cite{U}.)
If the ideal $J$ of $K\langle X\rangle$ is generated by a set $\{f_k\}$,
we fix an admissible ordering and start the construction of the Gr\"obner basis $B(J)$
defining $B(J):=\{f_k\}$. If the leading terms of $f_1,f_2\in B(J)$ are $\overline{f_1},\overline{f_2}$,
respectively, and $u_1\overline{f_1}v_1=u_2\overline{f_2}v_2$ for some monomials $u_k,v_k$, $k=1,2$, then,
for suitable nonzero $\alpha_1,\alpha_2\in K$, the $S$-polynomial
$f_{12}=\alpha_1u_1f_1v_1-\alpha_2u_2f_2v_2$, if not 0,  is lower
than $u_1f_1v_1$ and $u_2f_2v_2$ in the admissible ordering of
$K\langle X\rangle$. If $f_{12}\not=0$, we have an ``ambiguity'' and, in order to solve it,
we add $f_{12}$ to $B(J)$. We have two kinds of $S$-polynomials. In the first case,
$\overline{f_1}$ and $\overline{f_2}$ overlap, i.e., $u_1\overline{f_1}=\overline{f_2}v_2$.
In the second case $\overline{f_2}$ is a subword of $\overline{f_1}$, i.e.,
$\overline{f_1}=u_2\overline{f_2}v_2$ (or $\overline{f_1}$ is a subword of $\overline{f_2}$).
In the case of $\Omega$-algebras there are no overlaps and it is sufficient to
consider $S$-polynomials obtained when one of the leading terms is an $\Omega$-subword of the other.
Hence the Buchberger algorithm has the following form.
(Of course, we fix an admissible ordering of $K\{X\}_{\Omega}$ and
assume that the base field $K$ is constructive.)

\begin{algorithm}\label{Buchberger algorithm}
Let the $\Omega$-ideal $J$ of $K\{X\}_{\Omega}$ be generated by the set $\{f_k\}$.
We may assume that the coefficients of the leading terms $\overline{f_k}$ are all equal to 1.
We define $B(J):=\{f_k\}$. Let $\overline{f_2}$ be an $\Omega$-subword of $\overline{f_1}$
for some $f_1,f_2\in B(J)$, e.g. $\overline{f_1}=\nu_{ni}(u_1,\ldots,u_n)$, where
$u_1,\ldots,u_n\in\{X\}_{\Omega}$ and $\overline{f_2}$ is a subword of some $u_j$.
Then we replace in $B(J)$ the $\Omega$-polynomial $f_1$ by
\[
\widetilde{f_1}=f_1-\nu_{ni}(u_1,\ldots,\widetilde{u_j},\ldots,u_n),
\]
where the $\Omega$-polynomial $\widetilde{u_j}$ is obtained from
the $\Omega$-monomial $u_j$ by replacing $\overline{f_2}$ by
$f_2$. If $\widetilde{f_1}\not=0$, we norm it (making the leading
coefficient equal to 1). If $\widetilde{f_1}=0$, we remove it from
$B(J)$. We continue the process as long as possible.
\end{algorithm}


\begin{proposition}\label{Groebner basis of finitely generated ideal}
Finitely generated ideals of $K\{X\}_{\Omega}$ have finite Gr\"ob\-ner bases
with respect to any admissible ordering.
\end{proposition}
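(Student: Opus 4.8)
The plan is to run the Buchberger-type procedure of Algorithm \ref{Buchberger algorithm} starting from a finite set of generators and to check three things: that the procedure never enlarges the working set, that it terminates, and that what it returns is a Gr\"obner basis.

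First I fix a finite generating set $f_1,\dots,f_s$ of the $\Omega$-ideal $J$, normed so that every leading coefficient equals $1$, and set $B:=\{f_1,\dots,f_s\}$. One then applies Algorithm \ref{Buchberger algorithm} as long as $B$ contains two distinct elements $f',f''$ with $\overline{f''}$ an $\Omega$-subword of $\overline{f'}$, replacing $f'$ by the element $\widetilde{f'}=f'-\nu_{ni}(u_1,\dots,\widetilde{u_j},\dots,u_n)$ described there; if two members of $B$ have equal leading words, one of them is likewise reduced by the other. Two remarks make the bookkeeping routine. Since $\widetilde{u_j}$ is obtained from the $\Omega$-word $u_j$ by substituting the $\Omega$-polynomial $f''$ for one occurrence of the subword $\overline{f''}$, the difference $\widetilde{f'}-f'$ lies in the $\Omega$-ideal generated by $f''$; hence $B$ continues to generate $J$ and remains inside $J$. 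And, $f''$ being monic, the subtracted term has the same leading word and leading coefficient as $f'$, so either $\overline{\widetilde{f'}}\prec\overline{f'}$ or $\widetilde{f'}=0$. Thus every step either replaces a member of $B$ by one with a strictly smaller leading word or deletes a member, so $|B|\le s$ throughout; in particular the procedure can only ever produce a finite set.

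For termination I would associate to the current $B$ the nonincreasing list of its leading words, padded by copies of a symbol below every $\Omega$-word to the fixed length $s$. By the two remarks above, each step makes this list strictly smaller in the lexicographic order, which is a well-order because $\prec$ well-orders $\{X\}_{\Omega}$; hence the procedure halts. It stops at a finite $B\subseteq J$ still generating $J$ and such that no leading word of a member of $B$ is an $\Omega$-subword of the leading word of a different member; in particular the leading words of $B$ are pairwise distinct and form an antichain for the $\Omega$-subword relation. It remains to see that such a $B$ is a Gr\"obner basis of $J$. Here the tree combinatorics of $\Omega$-words is what makes the theory easier than the associative one: since an $\Omega$-word is a rooted tree, any two occurrences of subwords inside a third $\Omega$-word sit at vertices that are either nested or incomparable (hence disjoint), never partially overlapping, so there are no overlap ambiguities between leading terms (as already noted before Algorithm \ref{Buchberger algorithm}) and the antichain property disposes of the inclusion ambiguities. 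The $\Omega$-analogue of Buchberger's criterion then applies: the reduction relation defined by $B$ is terminating (again a lexicographic/multiset argument on monomials) and locally confluent, because any two applicable reductions act at disjoint positions and therefore commute, hence confluent; and a terminating confluent reduction forces, for every nonzero $g\in J$, the leading word $\overline g$ to contain some $\overline f$ with $f\in B$. So $B$ is a Gr\"obner basis of $J$, necessarily finite.

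I expect the last paragraph to be where the real work is: the termination argument is elementary, but verifying confluence --- the Buchberger criterion for $\Omega$-algebras --- is exactly where one uses that tree-like words cannot partially overlap. That absence of overlaps is also what keeps the working set from ever having to grow, which is why finite input yields finite output.
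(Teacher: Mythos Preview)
Your proof is correct and takes essentially the same approach as the paper's, which is very terse: it simply observes that Algorithm \ref{Buchberger algorithm} never enlarges the working set and asserts that the procedure terminates with a Gr\"obner basis. You have carefully supplied both the termination argument (via a well-founded order on the tuples of leading words) and the confluence verification that the paper leaves to the reader.
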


\begin{proof}
Let the ideal $J$ be generated by $f_1,\ldots,f_m$. Following
Algorithm \ref{Buchberger algorithm}, we start with
$B(J):=\{f_1,\ldots,f_m\}$. In each step we either remove one of the
elements of $B(J)$ or replace it with a new polynomial, without adding more
elements to $B(J)$. In a finite number of steps the procedure will
stop and we obtain a finite Gr\"obner basis of $J$.
\end{proof}

\begin{remark}
Proposition \ref{Groebner basis of finitely generated ideal}
implies the solvability of the word problem for $\Omega$-algebras.
This means that if $A$ is a finitely presented $\Omega$-algebra,
there is an algorithm which decides whether an element $f\in A$ is
equal to 0. In other words, if $A \cong
K\{x_1,\ldots,x_d\}_{\Omega}/J$ for a finitely generated $\Omega$-ideal
$J$, and the generators $f_1,\ldots,f_m$ of $J$ are explicitly
given, then we can decide whether $f\in
K\{x_1,\ldots,x_d\}_{\Omega}$ belongs to $J$. One should pay
attention to the fact that the solvability of decision problems
cannot be transferred to factor algebras. There exist finitely
generated ideals $J_0$ of the free associative algebra $K\langle
x_1,\ldots,x_d\rangle$, such that the word problem has no solution
in $A\cong K\langle x_1,\ldots,x_d\rangle/J_0$. Of course, in this
case $A\cong K\{x_1,\ldots,x_d\}/J$ for some ideal $J$ of
$K\{x_1,\ldots,x_d\}$ and $J$ is not finitely generated.
\end{remark}

We conclude this section with an $\Omega$-analogue of a theorem of Rajaee \cite{R} for $K\{X\}$.
We assume that $K\{X\}_{\Omega}$ is ${\mathbb Z}^d$-graded. Then, as in \cite{R}, the reduced Gr\"obner basis
of a (multi)homogeneous $\Omega$-ideal $J$ of $K\{X\}_{\Omega}$ consists of (multi)homogeneous elements.

\begin{theorem}\label{theorem of Rajaee}
Let $J$ be a homogeneous $\Omega$-ideal of $K\{X\}_{\Omega}$ with respect to any ${\mathbb Z}^d$-grading
of $K\{X\}_{\Omega}$. Then the Hilbert series of the factor algebra
$A\cong K\{X\}_{\Omega}/J$ and the generating functions of the set of generators $X$ and of
the reduced Gr\"obner basis $B(J)$ of $J$ with respect to any admissible ordering are related by
\begin{equation}\label{generating function of Groebner basis}
H(A,t_1,\ldots,t_d)=H(K\{x\}_{\Omega},G(X,t_1,\ldots,t_d)-G(B(J),t_1,\ldots,t_d)).
\end{equation}
\end{theorem}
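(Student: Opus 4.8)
The plan is to reduce the statement to Proposition~\ref{Hilbert series of free algebra}(ii) together with the structure of normal words modulo the initial ideal $I=I(J)$. First I would recall that, since $J$ is multihomogeneous, its reduced Gr\"obner basis $B(J)$ consists of multihomogeneous elements, and that a $K$-basis of $A\cong K\{X\}_\Omega/J$ is given by the set $N$ of normal $\Omega$-words, i.e.\ those $u\in\{X\}_\Omega$ that do not contain any $\overline{g}$, $g\in B(J)$, as an $\Omega$-subword. Hence $H(A,t_1,\ldots,t_d)=G(N,t_1,\ldots,t_d)$, and it suffices to compute the generating function of $N$ combinatorially in terms of $G(X,\cdot)$ and $G(B(J),\cdot)=G(\{\overline g\},\cdot)$.

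The key structural observation is that normal words form a \emph{sub-$\Omega$-magma} of $\{X\}_\Omega$ generated freely by $X$ together with one new ``forbidden'' symbol for each leading monomial $\overline g$, modulo killing that symbol. More precisely: a word $u=\nu_{ni}(u_1,\ldots,u_n)$ is normal if and only if each $u_j$ is normal \emph{and} $u$ itself is not equal to any $\overline g$. Because the reduced Gr\"obner basis has pairwise incomparable leading terms (no $\overline{g}$ is an $\Omega$-subword of another $\overline{g'}$), each $\overline g$ is itself a word built from normal proper subwords. Translating this into generating functions, let $v=H(A,t_1,\ldots,t_d)=G(N,\cdot)$. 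Counting normal words by their outermost operation, a normal word is either a generator $x_j\in X$, or of the form $\nu_{ni}(u_1,\ldots,u_n)$ with all $u_j$ normal, \emph{minus} the finitely many forbidden leading terms $\overline g$; since each $\overline g$ of multidegree $(k_1,\ldots,k_d)$ is uniquely expressible with all its maximal proper subwords normal, the count of such $\overline g$ of a given multidegree contributes with coefficient $1$. This yields the functional equation
\[
v - \bigl(G(X,t_1,\ldots,t_d)-G(B(J),t_1,\ldots,t_d)\bigr) \;=\; G(\Omega,v),
\]
i.e.\ $G(\Omega,v)-v+u=0$ with $u=G(X,t_1,\ldots,t_d)-G(B(J),t_1,\ldots,t_d)$, and both $u$ and $v$ vanish at the origin. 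By Remark~\ref{formal series as these of free algebras} (the uniqueness part of Proposition~\ref{Hilbert series of free algebra}(i)), this forces $v=H(K\{x\}_\Omega,u)$, which is exactly~(\ref{generating function of Groebner basis}).

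The main obstacle is establishing the displayed functional equation rigorously, i.e.\ showing that subtracting $G(B(J),\cdot)$ is precisely the right correction. One must check: (a) every leading monomial $\overline g$ has all its maximal proper $\Omega$-subwords normal (else it could be reduced, contradicting reducedness of $B(J)$), so that each $\overline g$ is genuinely ``produced once'' at the stage $\nu_{ni}(u_1,\ldots,u_n)$ with $u_j\in N$; (b) no two distinct $g,g'\in B(J)$ have the same leading monomial (immediate from reducedness); and (c) a word $\nu_{ni}(u_1,\ldots,u_n)$ with all $u_j$ normal fails to be normal if and only if it equals some $\overline g$ — this uses that if such a word contained a forbidden subword, that subword would have to be one of the $u_j$ or lie inside one, contradicting normality of the $u_j$, so the only obstruction is at the top vertex. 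Once (a)–(c) are in place the bookkeeping is routine, paralleling the proof of Proposition~\ref{Hilbert series of free algebra}, and the multigraded refinement (tracking $\mathbb{Z}^d$-degree rather than total degree) goes through verbatim as in \cite{R}.
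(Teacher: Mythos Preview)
Your argument is correct, and it reaches the same functional equation
\[
G(\Omega,H(A))-H(A)+\bigl(G(X)-G(B(J))\bigr)=0
\]
that the paper derives, after which both proofs conclude identically via Remark~\ref{formal series as these of free algebras}. The route to that equation, however, is genuinely different. The paper (following Rajaee) works on the complementary side: it counts the initial ideal $I=I(J)$ by writing every non-minimal element of $I$ as $\nu_{ni}(v_1,\ldots,w_j,\ldots,v_n)$ with $w_j\in I$, then applies inclusion--exclusion over the positions $j$ to express $G(I)$ in terms of $G(\{X\}_\Omega)$ and $G(B(I))$, and finally substitutes $H(A)=G(\{X\}_\Omega)-G(I)$. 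You instead count the normal words $N$ directly, using reducedness of $B(J)$ to show that the map $\nu_{ni}(u_1,\ldots,u_n)\mapsto$ ``normal'' fails for $u_j\in N$ precisely when the whole word equals some $\overline g$, and that each $\overline g$ arises exactly once this way; this bypasses inclusion--exclusion entirely. Your approach is a bit more elementary and makes the role of reducedness (your points (a)--(c)) more explicit; the paper's approach has the virtue of not needing the reducedness hypothesis until the very end (only the equality $G(B(J))=G(B(I))$), and stays closer to the original binary case in~\cite{R}. One small wording point: your dichotomy ``a normal word is either a generator $x_j\in X$, or \ldots'' tacitly assumes no $x_j$ is itself a leading term $\overline g$; your displayed equation handles that case correctly anyway, since such an $\overline g$ is subtracted off in $G(B(J))$, but you may want to say so.
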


\begin{proof}
We follow the idea of the proof in \cite{R}.
For convenience, we denote the Hilbert series $H(P,t_1,\ldots,t_d)$ or the generating
function $G(P,t_1,\ldots,t_d)$ of the graded object $P$ by
$H(P)$ and $G(P)$, respectively.
Clearly, the isomorphism $A\cong K\{X\}_{\Omega}/J$ implies
$H(A)=H(K\{X\}_{\Omega})-H(J)$. Also, $H(K\{X\}_{\Omega})=G(\{X\}_{\Omega})$
and $H(J)=G(I)$, where $I=I(J)\triangleleft \{X\}_{\Omega}$ is the initial
ideal of $J$. Finally, $G(B(J))=G(B(I))$, where $B(I)$ is the minimal
generating set of the $\Omega$-ideal $I$. Hence (\ref{generating function of Groebner basis}) is equivalent to
\[
G(\{X\}_{\Omega})-G(I)=H(K\{x\}_{\Omega},G(X)-G(B(I))).
\]
The elements of $I$ which do not belong to the minimal set of
generators $B(I)$ are characterized by the property that they are
of the form
\[
u=\nu_{ni}(v_1,\ldots,v_{j-1},w_j,v_{j+1},\ldots,v_n),\quad
v_k\in\{X\}_{\Omega},\quad w_j\in I.
\]
Hence
\[
I=\left(\bigcup_{\nu_{ni}\in\Omega}\bigcup_{j=1}^n
\nu_{ni}(\underbrace{\{X\}_{\Omega},\ldots,\{X\}_{\Omega}}_{j-1\text{ times}},I,
\underbrace{\{X\}_{\Omega},\ldots,\{X\}_{\Omega}}_{n-j\text{ times}})\right)\bigcup B(I),
\]
\[
G(I)=\sum_{\nu_{ni}\in\Omega}G\left(
\bigcup_{j=1}^n
\nu_{ni}(\underbrace{\{X\}_{\Omega},\ldots,\{X\}_{\Omega}}_{j-1\text{ times}},I,
\underbrace{\{X\}_{\Omega},\ldots,\{X\}_{\Omega}}_{n-j\text{ times}})
\right) +G(B(I)).
\]
By the principle of inclusion and exclusion,
\[
G\left(
\bigcup_{j=1}^n
\nu_{ni}(\underbrace{\{X\}_{\Omega},\ldots,\{X\}_{\Omega}}_{j-1\text{ times}},I,
\underbrace{\{X\}_{\Omega},\ldots,\{X\}_{\Omega}}_{n-j\text{ times}})
\right)
\]
\[
=\sum_{k=1}^n(-1)^{k-1}\binom{n}{k}G^{n-k}(\{X\}_{\Omega})G^k(I)
\]
\[
=G^n(\{X\}_{\Omega})-(G(\{X\}_{\Omega})-G(I))^n.
\]
This implies
\[
G(I)=\sum_{\nu_{ni}\in\Omega}\left(
G^n(\{X\}_{\Omega})-(G(\{X\}_{\Omega})-G(I))^n
\right) +G(B(I))
\]
\[
=G(\Omega,G(\{X\}_{\Omega}))-G(\Omega,G(\{X\}_{\Omega})-G(I))+G(B(I)).
\]
Applying (\ref{functional equation}) and Proposition \ref{Hilbert series of free algebra} (ii) we obtain
\[
G(I)=G(\{X\}_{\Omega})-G(X)-G(\Omega,G(\{X\}_{\Omega})-G(I))+G(B(I)),
\]
\[
G(\Omega,G(\{X\}_{\Omega})-G(I))-(G(\{X\}_{\Omega})-G(I))+(G(X)-G(B(I)))=0.
\]
By (\ref{equation for u and v}) in Remark \ref{formal series as
these of free algebras} we conclude that
\[
H(A)=G(\{X\}_{\Omega})-G(I)=H(K\{x\}_{\Omega},G(X)-G(B(I)))
\]
and this completes the proof.
\end{proof}

\begin{corollary}\label{explicit generating function of Groebner basis}
Let $J$ be a homogeneous $\Omega$-ideal of $K\{X\}_{\Omega}$ with respect to any ${\mathbb Z}^d$-grading
of $K\{X\}_{\Omega}$. Let $B(J)$ be the reduced Gr\"obner basis of $J$
with respect to any admissible ordering. Then
\[
G(B(J),t_1,\ldots,t_d)=G(\Omega,H(K\{X\}_{\Omega}/J,t_1,\ldots,t_d))
\]
\[
-H(K\{x\}_{\Omega}/J,t_1,\ldots,t_d)+G(X,t_1,\ldots,t_d).
\]
\end{corollary}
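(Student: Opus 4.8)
The plan is to derive this directly from Theorem~\ref{theorem of Rajaee} by a purely algebraic manipulation of formal power series, since the hard work (the inclusion-exclusion count for the initial ideal and the appeal to the functional equation) has already been done there. The statement of Theorem~\ref{theorem of Rajaee} gives
\[
H(A,t_1,\ldots,t_d)=H(K\{x\}_{\Omega},G(X,t_1,\ldots,t_d)-G(B(J),t_1,\ldots,t_d)),
\]
where $A\cong K\{X\}_{\Omega}/J$, so the task is simply to ``solve'' this relation for $G(B(J),t_1,\ldots,t_d)$.

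First I would abbreviate, writing $v=H(A)$, $u=G(X)-G(B(J))$, and $H=H(K\{x\}_{\Omega},\cdot)$, so that $v=H(u)$. Both $u$ and $v$ have zero constant term (the algebra is nonunitary and $X$ consists of positive-degree generators), so Remark~\ref{formal series as these of free algebras} applies: the pair $(u,v)$ satisfies the functional equation~(\ref{equation for u and v}), namely
\[
G(\Omega,v)-v+u=0,
\]
hence $u=v-G(\Omega,v)$. Substituting back $u=G(X)-G(B(J))$ and $v=H(A)$ gives
\[
G(X,t_1,\ldots,t_d)-G(B(J),t_1,\ldots,t_d)=H(A,t_1,\ldots,t_d)-G(\Omega,H(A,t_1,\ldots,t_d)),
\]
which rearranges at once to the claimed formula
\[
G(B(J),t_1,\ldots,t_d)=G(\Omega,H(A,t_1,\ldots,t_d))-H(A,t_1,\ldots,t_d)+G(X,t_1,\ldots,t_d).
\]
(One should note the typo in the displayed statement, where the first $H(K\{X\}_{\Omega}/J,\ldots)$ and the second $H(K\{x\}_{\Omega}/J,\ldots)$ must both read $H(A,t_1,\ldots,t_d)=H(K\{X\}_{\Omega}/J,t_1,\ldots,t_d)$; I would simply state the corollary with the consistent notation $H(A)=H(K\{X\}_{\Omega}/J)$.)

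There is essentially no obstacle here: the only point requiring a word of care is the invocation of Remark~\ref{formal series as these of free algebras}, which needs $u(0,\ldots,0)=v(0,\ldots,0)=0$, and this holds because $G(X)$, $G(B(J))$, and $H(A)$ are all power series without constant term. Alternatively, and perhaps more transparently for the reader, I could bypass Theorem~\ref{theorem of Rajaee} and give a one-line proof: by Corollary~\ref{free generators of subalgebras} applied to nothing in particular, the formula is just~(\ref{equation for u and v}) read with $u=G(X)-G(B(J))$ and $v=H(A)$; but since Theorem~\ref{theorem of Rajaee} is the immediately preceding result, deriving the corollary from it by algebra is the cleanest exposition. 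I would present the short rearrangement and leave it at that.
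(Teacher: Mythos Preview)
Your proof is correct and is essentially identical to the paper's own argument: the paper also applies Remark~\ref{formal series as these of free algebras} (equation~(\ref{equation for u and v})) to the conclusion~(\ref{generating function of Groebner basis}) of Theorem~\ref{theorem of Rajaee}, obtaining $G(\Omega,H(A))-H(A)+G(X)-G(B(J))=0$ and then rearranging. Your observation about the typo ($K\{x\}_{\Omega}/J$ versus $K\{X\}_{\Omega}/J$) is apt.
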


\begin{proof}
Applying (\ref{equation for u and v})
in Remark \ref{formal series as these of free algebras}
to (\ref{generating function of Groebner basis})
we obtain for $A\cong K\{X\}_{\Omega}/J$ that
\[
G(\Omega,H(A))-H(A)+G(X)-G(B(J))=0,
\]
which gives the expression for the generating function of
the reduced Gr\"obner basis $B(J)$ of $J$.
\end{proof}

\begin{example}
Let $\Omega=\underline{n}=\{\nu_n\}$ consist of one $n$-ary
operation only, as in Example \ref{examples of Hilbert series}
(iii), and let $A$ be the free commutative and associative $n$-ary
algebra in one variable, i.e., $A$ is the homomorphic image of
$K\{x\}_{\underline{n}}$ modulo the ideal generated by all
\[
\nu_n(u_1,\ldots,u_n)-\nu_n(u_{\sigma(1)},\ldots,u_{\sigma(n)}),
\quad \sigma\in S_n,
\]
\[
\nu_n(u_1,\ldots,\nu_n(u_j,v_2,\ldots,v_n),\ldots,u_n)
\]
\[
-
\nu_n(\nu_n(u_1,v_2,\ldots,v_n),\ldots,u_j,\ldots,u_n),\quad
j=2,\ldots,n,
\]
where $S_n$ is the symmetric group and $u_j,v_k\in
K\{x\}_{\underline{n}}$. Hence the homogeneous component $A^{(k)}$
is one-dimensional for $k=(n-1)m+1$ and is equal to zero for all
other $k$. We may assume that $A^{((n-1)m+1)}$ is spanned by
\[
x^{(n-1)m+1}=\nu_n(x^{(n-1)(m-1)+1},x,\ldots,x),\quad m\geq 1.
\]
Since $G(\Omega,t)=t^n$ and
\[
H(A,t)=\sum_{m\geq 0}t^{(n-1)m+1}=\frac{t}{1-t^{n-1}},
\]
Corollary \ref{explicit generating function of Groebner basis} gives that the generating function of
the reduced Gr\"obner basis with respect to any admissible ordering is
\[
G(B(J),t)=\left(\frac{t}{1-t^{n-1}}\right)^n-\frac{t}{1-t^{n-1}}+t
\]
\[
=t^n\sum_{m\geq 1}\left(\binom{m+n-1}{n-1}-1\right)t^{(n-1)m}.
\]
We fix the admissible ordering on $\{x\}_{\underline{n}}$ which
compares the monomials first by degree and then by inverse
lexicographic ordering: If $u=\nu_n(u_1,\ldots,u_n)$,
$v=\nu_n(v_1,\ldots,v_n)$, then $u\prec v$ if either
$\text{deg}(u)<\text{deg}(v)$ or $\text{deg}(u)=\text{deg}(v)$ and
$u_k\prec v_k$, $u_{k+1}=v_{k+1},\ldots,u_n=v_n$. In this way
$x^{(n-1)m+1}$ is the smallest monomial of degree $(n-1)m+1$. Then
the reduced Gr\"obner basis of $J$ consists of all
\[
\nu_n(x^{(n-1)m_1+1},\ldots,x^{(n-1)m_n+1})-x^{m(n-1)+1},
\]
$m_1+\cdots+m_n=m$, $(m_1,\ldots,m_n)\not=(m,0,\ldots,0)$.
The number of such polynomials of degree $(n-1)m+1$ is equal to the number
of all monomials $z_1^{m_1}\cdots z_n^{m_n}\not=z_1^m$ of total degree $m$
in $n$ variables $z_1,\ldots,z_n$.
\end{example}

\section{Invariant theory}

Till the end of the paper we fix a field $K$ of characteristic 0.
We assume that the set $X=\{x_1,\ldots,x_d\}$ is finite and
consists of $d$ elements. The general linear group $GL_d(K)$ acts
canonically on the $d$-dimensional vector space $KX$ with basis
$X$ and we identify it with the group of invertible
$d\times d$ matrices. If
\[
g=\left(\begin{matrix}
\alpha_{11}&\cdots&\alpha_{1d}\\
\vdots&\ddots&\vdots\\
\alpha_{d1}&\cdots&\alpha_{dd}\\
\end{matrix}\right)\in GL_d(K),\quad \alpha_{pq}\in K,
\]
then the action on $KX$ is defined by
\[
g(x_j)=\alpha_{1j}x_1+\cdots+\alpha_{dj}x_d,\quad j=1,\ldots,d.
\]
This action is extended diagonally on $K\{X\}_{\Omega}$ by
\[
g(u(x_1,\ldots,x_d))=u(g(x_1),\ldots,g(x_d)),\quad
u(x_1,\ldots,x_d)\in K\{X\}_{\Omega}.
\]
If $G$ is a subgroup of $GL_d(K)$, then the algebra of
$G$-invariants is defined in the obvious way as
\[
K\{X\}_{\Omega}^G=\{f\in K\{X\}_{\Omega}\mid g(f)=f,\quad g\in
G\}.
\]

One of the main problems in classical invariant theory is the
problem for finite generation of the algebra of invariants which
is a partial case of the 14th Hilbert problem. The same problem
has been intensively studied in noncommutative invariant theory.
See the surveys \cite{F1} and \cite{Dr2} for free and relatively
free associative algebras and the papers \cite{Br} and \cite{Dr1}
for free and relatively free Lie algebras. It has turned out that
in the noncommutative case the algebra of invariants is finitely
generated in very special cases only.

For example, if $G$ is a finite linear group acting on the free
associative algebra $K\langle X\rangle$, a theorem established independently
by Dicks and Formanek \cite{DF} and Kharchenko \cite{Kh2}
states that $K\langle X\rangle^G$ is finitely generated if and only if
$G$ is cyclic and acts on $KX$ by scalar multiplication. A simplified version of the proof
of Kharchenko is given by Dicks in \cite{C2}.
Koryukin \cite{Ko1} considered the case of the action of any linear group $G$ on $K\langle X\rangle$.
Let $d_1$ be the minimal integer with the property that there exist linearly independent $y_1,\ldots,y_{d_1}$
in the vector space $KX$ such that
$K\langle X\rangle^G\subset K\langle y_1,\ldots,y_{d_1}\rangle$. Changing linearly the system of
free generators $X=\{x_1,\ldots,x_d\}$, we may assume that
$K\langle X\rangle^G\subset K\langle x_1,\ldots,x_{d_1}\rangle$. The theorem of Koryukin \cite{Ko1} gives that
$K\langle X\rangle^G$ is finitely generated if and only if $G$ acts on
$Kx_1\oplus\cdots\oplus Kx_{d_1}$ as a finite cyclic group of scalar multiplications.

In the case of the free Lie algebra $L(X)$  Bryant \cite{Br} showed that $L(X)^G$
is not finitely generated if $G\not=\langle e\rangle$ is any finite group. The same result holds from \cite{Dr1}.

It is natural to expect that
something similar holds for free $\Omega$-algebras. If $J$ is an $\Omega$-ideal of
$K\{X\}_{\Omega}$ which is $GL_d(K)$-invariant, i.e.,
$GL_d(K)(J)=J$, then the action of $GL_d(K)$ on $K\{X\}_{\Omega}$
induces an action on the factor algebra $K\{X\}_{\Omega}/J$. Hence
the $G$-invariants of $K\{X\}_{\Omega}$ go to $G$-invariants of
$K\{X\}_{\Omega}/J$. If the group $G$ is finite or, more
generally, acts as a reductive group on $KX$, then every invariant
of $K\{X\}_{\Omega}/J$ can be lifted to an invariant of
$K\{X\}_{\Omega}$. Hence, in this case we may study $G$-invariants
of factor algebras and then lift the obtained results to the
algebra $K\{X\}_{\Omega}^G$ itself.

In the case of ordinary polynomial algebras, if the group $G$ is
finite, the algebra of invariants $K[X]^G$ is always nontrivial
and even of the same transcendence degree $d$ as $K[X]$. Lifting
the invariants, we obtain that the algebras $K\langle X\rangle^G$
and $K\{X\}^G$ are also nontrivial. In the case of (non-binary)
free $\Omega$-algebras, the picture is completely different:

\begin{example}\label{zero algebra of invariants}
Let $G=\{e,-e\}$, where $e$ is the identity $d\times d$ matrix and
let $\Omega=\underline{3}=\{\nu_3\}$ consist of a single ternary operation. Since
\[
(-e)(u(x_1,\ldots,u_d))=(-1)^ku(x_1,\ldots,u_d),
\]
\[
k=\text{deg}(u),u(x_1,\ldots,u_d)\in \{X\}_{\underline{3}},
\]
$K\{X\}_{\underline{3}}^G$ is spanned by all homogeneous monomials of
even degree. By Example \ref{applying Lagrange formula} (or by
easy induction), $K\{X\}_{\underline{3}}$ is spanned by monomials of
odd degree only. Hence $K\{X\}_{\underline{3}}^G=\{0\}$.
\end{example}

Let $T$ be an $\Omega$-tree with $N$ leaves. It is a reduced tree such that
every internal vertex (i.e., vertex which is not a leaf) is
labeled by an element of $\Omega_n$, where $n$ is the number of
incoming edges of the vertex. We denote by $\nu_T$ the
corresponding composition of operations from $\Omega$. If we label
the leaves of $T$ by $x_1,\ldots,x_N$ and denote the corresponding
$\Omega$-monomial by $\nu_T(x_1,\ldots,x_N)$, then the labeling of
the leaves of $T$ by $x_{j_1},\ldots,x_{j_N}$ gives rise to the
monomial $\nu_T(x_{j_1},\ldots,x_{j_N})$. For example, if $T$ is
the $\Omega$-tree in Fig. 4, then
\[
\nu_T(x_1,\ldots,x_6)=\nu_{31}(\nu_{23}(x_1,x_2),x_3,\nu_{32}(x_4,x_5,x_6)),
\]
and $\nu_T(x_1,x_1,x_3,x_2,x_1,x_4)$  corresponds to the $\Omega$-tree
with labeled leaves given in Fig. 1.

\[
 \xymatrix{ {\bullet}_{x_1}\ar@{-}[dr]& {\bullet}_{x_2}\ar@{-}[d]& {\bullet}_{x_3}\ar@{-}[dd]
 & {\bullet}_{x_4}\ar@{-}[d]& {\bullet}_{x_5}\ar@{-}[dl]& {\bullet}_{x_6}\ar@{-}[dll]\\
& {\bullet}_{\nu_{23}}\ar@{-}[dr] & &{\bullet}_{\nu_{32}}\ar@{-}[dl] & & \\
& & {\bullet}_{\nu_{31}}
\\}
\]
\begin{center}Fig.\ 4
\end{center}

\noindent Clearly, the $GL_d(K)$-module
$\nu_T(KX,\ldots,KX)$ is isomorphic to the $N$-th tensor power
$(KX)^{\otimes N}$ by the isomorphism which deletes the operations
\begin{equation}\label{module structure for fixed tree}
\pi_T:\nu_T(x_{j_1},\ldots,x_{j_N})\to x_{j_1}\otimes\cdots\otimes
x_{j_N}.
\end{equation}

As in the classical case, if $G$ is a subgroup of $GL_d(K)$, then
the algebra of invariants $K\{X\}_{\Omega}^G$ is graded with
respect to the usual grading defined by $\text{deg}(x_j)=1$,
$j=1,\ldots,d$. Even more holds in $K\{X\}_{\Omega}$.
The following proposition easily implies that we may use results on the $G$-invariants
$K\langle X\rangle^G$ in the free associative algebra $K\langle X\rangle$
to describe the $G$-invariants $K\{X\}_{\Omega}^G$ for an arbitrary
subgroup $G$ of $GL_d(K)$.

\begin{proposition}\label{relation with invariants of free associative algebras}
\text{\rm (i)} Let
\[
f(x_1,\ldots,x_d)=\sum f_T(x_1,\ldots,x_d)\in K\{X\}_{\Omega},
\]
where $f_T(x_1,\ldots,x_d)\in \nu_T(KX,\ldots,KX)$.
Then $f(x_1,\ldots,x_d)$ is $G$-invariant if and only if
$\pi_T(f_T(x_1,\ldots,x_d))\in K\langle X\rangle^G$ for all
$\Omega$-trees $T$;

\text{\rm (ii)} The Hilbert series of $K\langle X\rangle^G$,
$K\{x\}_{\Omega}$ and $K\{X\}_{\Omega}^G$ are related as follows. If
\[
H(K\langle X\rangle^G,t)=\sum_{m\geq 1}a_mt^m,\quad
H(\{x\}_{\Omega},t)=\sum_{m\geq 1}b_mt^m,
\]
then
\[
H(\{X\}_{\Omega}^G,t)=\sum_{m\geq 1}a_mb_mt^m.
\]
\end{proposition}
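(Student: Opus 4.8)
The plan is to decompose the free $\Omega$-algebra $K\{X\}_\Omega$ as a $GL_d(K)$-module into a direct sum indexed by $\Omega$-trees. For a fixed $\Omega$-tree $T$ with $N$ leaves, the subspace $\nu_T(KX,\ldots,KX)\subset K\{X\}_\Omega$ is a $GL_d(K)$-submodule because the action is diagonal: $g(\nu_T(x_{j_1},\ldots,x_{j_N}))=\nu_T(g(x_{j_1}),\ldots,g(x_{j_N}))$, which again lies in $\nu_T(KX,\ldots,KX)$. The map $\pi_T$ of (\ref{module structure for fixed tree}) is a $GL_d(K)$-module isomorphism $\nu_T(KX,\ldots,KX)\to(KX)^{\otimes N}$: it is clearly a vector space isomorphism (it just erases the bracketing/operations, which is a bijection on the spanning monomials and respects linear combinations), and it is $GL_d(K)$-equivariant for exactly the same reason — both sides transform by the diagonal action on the tensor slots. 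Since every $\Omega$-monomial corresponds to a unique $\Omega$-tree with labeled leaves, we get the $GL_d(K)$-module decomposition $K\{X\}_\Omega=\bigoplus_T \nu_T(KX,\ldots,KX)$, the sum running over all $\Omega$-trees $T$.

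For part (i): any $f\in K\{X\}_\Omega$ decomposes uniquely as $f=\sum_T f_T$ with $f_T\in\nu_T(KX,\ldots,KX)$, and since the summands are $G$-submodules, $g(f)=\sum_T g(f_T)$ with $g(f_T)\in\nu_T(KX,\ldots,KX)$. Hence $g(f)=f$ for all $g\in G$ if and only if $g(f_T)=f_T$ for all $T$ and all $g\in G$, i.e.\ $f_T\in\nu_T(KX,\ldots,KX)^G$ for each $T$. Transporting through the isomorphism $\pi_T$, this says $\pi_T(f_T)\in\bigl((KX)^{\otimes N}\bigr)^G$. Finally I identify $\bigl((KX)^{\otimes N}\bigr)^G$ with the degree-$N$ homogeneous component $K\langle X\rangle^{G,(N)}$ of the invariants in the free associative algebra: indeed $K\langle X\rangle^{(N)}$ is exactly $(KX)^{\otimes N}$ as a $GL_d(K)$-module (the monomial $x_{j_1}\cdots x_{j_N}$ corresponding to $x_{j_1}\otimes\cdots\otimes x_{j_N}$), so $\pi_T(f_T)\in K\langle X\rangle^G$ means precisely that $f_T$ maps to a $G$-invariant. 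This gives the claimed equivalence.

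For part (ii): fix the standard grading $\deg(x_j)=1$. Each $\Omega$-tree $T$ with $N$ leaves contributes, by part (i) together with the isomorphism $\pi_T$, a copy of $K\langle X\rangle^{G,(N)}$ to the degree-$N$ component of $K\{X\}_\Omega^G$. Therefore
\[
\dim\bigl(K\{X\}_\Omega^{G,(m)}\bigr)=\sum_{N\ge 1}\bigl(\#\{\Omega\text{-trees with }N\text{ leaves}\}\bigr)\cdot\dim\bigl(K\langle X\rangle^{G,(N)}\bigr),
\]
where only the single term $N=m$ survives because an $\Omega$-tree with $N$ leaves yields $\Omega$-monomials of degree exactly $N$ (all generators have degree $1$). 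Since $\dim K\{x\}_\Omega^{(m)}=b_m$ equals the number of $\Omega$-trees with $m$ leaves (this is how $b_m$ is defined, the Hilbert series of $K\{x\}_\Omega$ being the generating function of $\{x\}_\Omega$, cf.\ Proposition~\ref{Hilbert series of free algebra}), and $\dim K\langle X\rangle^{G,(m)}=a_m$, we conclude $\dim K\{X\}_\Omega^{G,(m)}=a_m b_m$, which is the asserted Hadamard-product formula $H(\{X\}_\Omega^G,t)=\sum_{m\ge1}a_m b_m t^m$.

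The only real subtlety — and the step I would be most careful with — is the equivariance of $\pi_T$, i.e.\ verifying that erasing the operation symbols genuinely intertwines the $GL_d(K)$-actions rather than merely being a linear bijection; this works because $\Omega$ consists of \emph{multilinear} operations, so the action distributes over each slot independently, but one should check that the bracketing structure of $T$ plays no role in how $g$ acts. Everything else is bookkeeping with the tree decomposition and the observation that the degree of a monomial equals its number of leaves.
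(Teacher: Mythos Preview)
Your proof is correct and follows essentially the same approach as the paper: decompose $K\{X\}_\Omega$ into the $GL_d(K)$-submodules $\nu_T(KX,\ldots,KX)$ indexed by $\Omega$-trees, use the equivariant isomorphism $\pi_T$ to transfer each piece to a tensor power, and then count. The paper's proof is only a sketch (three sentences), and your write-up simply spells out the details it leaves implicit --- in particular the equivariance of $\pi_T$ and the identification $(KX)^{\otimes N}\cong K\langle X\rangle^{(N)}$ --- so there is no substantive difference in strategy.
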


\begin{proof}
Since $GL_d(K)$ sends
$\nu_T(x_{j_1},\ldots,x_{j_N})$ to a linear combination of
monomials of the same kind, we obtain immediately that each $G$-invariant is a
linear combination of $G$-invariants $f_T\in \nu_T(KX,\ldots,KX)$,
which establishes (i). The proof of (ii) follows from the equality
\[
K\{X\}_{\Omega}^G=\bigoplus \nu_T(KX,\ldots,KX)^G,
\]
where the direct sum of vector spaces is on all  $\Omega$-trees $T$.
\end{proof}

Let $G$ be an arbitrary subgroup of $GL_d(K)$ and let us consider
the action of $G$ on $KX$. Since every basis of the vector space
$KX$ is a system of free generators of $K\{X\}_{\Omega}$, we fix a
basis of the subspace of $G$-invariants $(KX)^G$ and assume that
$\{x_1,\ldots,x_{d_0}\}\subset X$ is a basis of $(KX)^G$. Then we
complete this system to a basis of the whole $KX$ by
$x_{d_0+1},\ldots,x_d$. Obviously, every $\Omega$-polynomial
$f(x_1,\ldots,x_{d_0})$ is a $G$-invariant. We call such
polynomials {\it obvious invariants}.

\begin{example}\label{Weitzenboeck 2 x 2}
Let $d=4$, let $G$ be the cyclic subgroup of $GL_4(K)$ generated
by the matrix
\[
g=\left(
\begin{matrix}
1&0&1&0\\
0&1&0&1\\
0&0&1&0\\
0&0&0&1\\
\end{matrix}
\right),
\]
and let $\Omega=\underline{3}=\{\nu_3\}$ consist of one ternary operation
only. The subspace of $G$-invariants of $KX$ is
two-dimensional and is spanned by $x_1,x_2$. The polynomial
\[
f(x_1,x_2,x_3,x_4)=\nu_3(x_3,x_1,x_2)-\nu_3(x_1,x_1,x_4)
\]
is a nonobvious $G$-invariant because $g(f)=f$ and $f$ depends on
variables different from $x_1,x_2$.
\end{example}

The next result shows that the algebra of invariants is not
finitely generated in all nontrivial cases.

\begin{theorem}\label{nonfinitely generated invariants}
Let $G$ be any subgroup of $GL_d(K)$ and let $K\{X\}_{\Omega}^G$
contain nonobvious invariants. Then the algebra
$K\{X\}_{\Omega}^G$ is not finitely generated.
\end{theorem}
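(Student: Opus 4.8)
The plan is to argue by contradiction: suppose $K\{X\}_\Omega^G$ is generated by finitely many homogeneous invariants $g_1,\dots,g_r$ (by Corollary~\ref{generators of graded subalgebras} we may take a homogeneous generating set, and if it were finite we could discard all but finitely many). Let $N$ be the maximum degree of the $g_i$. The key structural fact I would exploit is Proposition~\ref{relation with invariants of free associative algebras}: for each $\Omega$-tree $T$ the space $\nu_T(KX,\dots,KX)^G$ is, via the $GL_d(K)$-equivariant isomorphism $\pi_T$, a copy of the associative invariants $(KX)^{\otimes m}{}^G$ in the corresponding multilinear degree. Since there is a nonobvious invariant, there is some tree $T_0$ and some homogeneous associative invariant $w$ of degree $m_0$ that genuinely involves variables outside $(KX)^G$; feeding $w$ back through $\pi_{T_0}^{-1}$ gives a nonobvious $\Omega$-invariant supported on a single tree.

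The heart of the argument is to produce, in arbitrarily high degree, an invariant supported on a single $\Omega$-tree that cannot be a product (a composition of the $\nu_{ni}$) of lower-degree invariants. First I would use the nonobvious associative invariant $w$ above together with the classical fact (from the theory behind \cite{F1, Br, Dr1}) that in the free associative algebra one can build, for every $k$, a $G$-invariant multilinear element that is ``indecomposable'' in the appropriate sense — concretely, take a word of the form $u\,w\,u\,w\cdots$ or a suitable iterated construction whose only way of being written using the algebra operations forces one of the factors to again involve the non-invariant variables. Transport such an element to the $\Omega$-side by choosing a tall tree $T_k$ with $m_0 k$ leaves and setting $f_k = \pi_{T_k}^{-1}(w\otimes\cdots\otimes w)$ (or a more carefully chosen tensor), an invariant of degree $m_0 k$ supported on the single tree $T_k$. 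Now suppose $f_k$ lies in the subalgebra generated by $g_1,\dots,g_r$: then $f_k$ is a $K$-linear combination of $\Omega$-monomials in the $g_i$, i.e.\ of elements $\nu_S(g_{i_1},\dots,g_{i_s})$ for various trees $S$. Projecting onto the isotypic/tree component $\nu_{T_k}(KX,\dots,KX)$ and applying $\pi_{T_k}$, each such summand becomes, inside $(KX)^{\otimes m_0 k}{}^G$, a tensor product of the pieces of the $g_{i_j}$ living on the appropriate subtrees — and since every $g_{i_j}$ has degree $\le N$, each tensor factor is an associative $G$-invariant of degree $\le N$. Choosing $w$ so that the word $w\otimes\cdots\otimes w$ is \emph{not} expressible as a tensor product of $G$-invariant tensors of bounded length once $k$ is large (this is exactly the associative indecomposability one inherits from Bryant/Drensky, or, in the Dicks–Formanek/Koryukin setting, from the non-cyclic-scalar hypothesis hidden in ``nonobvious''), we reach a contradiction for $k$ large.

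The step I expect to be the main obstacle is the last one: pinning down the precise sense in which the associative invariant must be ``indecomposable under tensor factorizations of bounded length'' and verifying that a nonobvious $\Omega$-invariant really forces such an associative invariant to exist. One has to rule out the possibility that although the chosen $f_k$ is not visibly a product, it is a \emph{linear combination} of products — so the argument must be run at the level of the graded vector-space decomposition $K\{X\}_\Omega^G=\bigoplus_T \nu_T(KX,\dots,KX)^G$, comparing dimensions (Hilbert series) in high degree: the subalgebra generated by $g_1,\dots,g_r$ has Hilbert series bounded, degree by degree, by that of the free $\Omega$-algebra on $r$ generators of degrees $\deg g_i$, whereas $H(K\{X\}_\Omega^G,t)=\sum a_m b_m t^m$ with $a_m=\dim(KX)^{\otimes m}{}^G$ growing (for a nonobvious action $a_m$ is eventually positive and in fact grows like a polynomial times $\exp$ of the associative-invariant rate) and $b_m$ the $\Omega$-tree count growing exponentially — so a growth/asymptotics comparison, in the spirit of Definition~\ref{exponent of free algebra} and the surrounding discussion, forces the finitely generated subalgebra to be proper in high degree. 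I would present the clean contradiction via this Hilbert-series growth estimate, using the tree-by-tree decomposition to make ``generated by bounded-degree elements'' precise, and invoke the associative results of \cite{F1, Br, Dr1, Ko1} only to guarantee that the relevant $a_m$ are nonzero and large whenever a nonobvious invariant is present.
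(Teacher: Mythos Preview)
Your proposal sketches two strategies but completes neither, and both have real problems as stated.

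For the first (indecomposability) strategy, you correctly identify the obstacle yourself: you need that $w^{\otimes k}$ cannot be written as a \emph{linear combination} of tensor products of $G$-invariants each of degree $\le N$. You do not prove this, and the references you invoke do not supply it. Bryant and Drensky treat free and relatively free \emph{Lie} algebras, not associative tensor factorizations; Koryukin's theorem characterizes when $K\langle X\rangle^G$ is finitely generated, which is a different question from bounded-length tensor decomposability of a specific invariant. So the step ``choosing $w$ so that $w\otimes\cdots\otimes w$ is not expressible \dots'' is an assertion, not an argument.

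For the second (growth) strategy, the claim that $a_m=\dim(K\langle X\rangle^G)^{(m)}$ ``grows like a polynomial times $\exp$'' is unsupported and in fact need not hold uniformly: the theorem is stated for an \emph{arbitrary} subgroup $G\subset GL_d(K)$ admitting a nonobvious invariant, and the growth of $a_m$ varies widely with $G$. Even granting good growth, you would still have to rule out that $\sum a_mb_mt^m$ equals $H(K\{x\}_\Omega,P(t))$ for some polynomial $P$, and you give no mechanism for that comparison. A free $\Omega$-algebra on finitely many generators of high degrees can have quite large exponential growth, so a crude exponent comparison is not enough.

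The paper's proof avoids all of this by a direct leading-term argument. One fixes a nonobvious invariant $f$ of \emph{minimal} degree supported on a single tree, chooses an admissible ordering that first compares degree in the noninvariant variables, and builds an explicit sequence $f_{k+1}$ by nesting copies of $f_k$ inside the outermost operation of $f$. The leading monomial of $f_{k+1}$ then has a first argument $u_1^0$ that still involves a noninvariant variable; if $K\{X\}_\Omega^G$ were finitely generated, expressing $\overline{f_{k+1}}$ as an $\Omega$-monomial in the leading terms of the generators would force $u_1^0$ itself to be the leading term of an invariant of strictly smaller degree than $f$, contradicting minimality. No external invariant-theoretic input and no asymptotics are needed.
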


\begin{proof}
We choose a homogeneous nonobvious invariant of minimal degree. We
may assume that it is of the form
\begin{equation}\label{explicit form of f}
f=\sum_{j=1}^m\alpha_jw_j,\quad \alpha_j\in
K,w_j=\nu_T(x_{j_1},\ldots,x_{j_N})\in\{X\}_{\Omega},
\end{equation}
where all $w_j$ correspond to the same $\Omega$-tree $T$ and there
is no $w_j$ which depends only on the $G$-invariant variables
$x_1,\ldots,x_{d_0}$. Let, for some $k=1,\ldots,N$, and some
$w_j$, the $k$-th coordinate $x_{j_k}$ in
$w_j=\nu_T(x_{j_1},\ldots,x_{j_N})$ be a noninvariant variable,
i.e., $j_k>d_0$. We order the operations from $\Omega$ first by
degree and then in an arbitrary way. We fix the admissible
ordering on $\{X\}_{\Omega}$ which compares the $\Omega$-monomials
$w=\nu_{ni}(u_1,\ldots,u_n)$ first by total degree
$\text{deg}(w)$, then by the degree
$\text{deg}_{\text{noninv}}(w)$ with respect to the noninvariant
variables $x_{d_0+1},\ldots,x_d$, then by the first (outer)
operation $\nu_{ni}$, and then lexicographically. In the special
case of $w=\nu_T(x_{p_1},\ldots,x_{p_N})$,
where the $\Omega$-tree $T$ is as in (\ref{explicit form of f}), we start the
lexicographic ordering with the $k$-th position. Hence,
\[
w'=\nu_{n_1i_1}(u_1,\ldots,u_{n_1})\prec
\nu_{n_2i_2}(v_1,\ldots,v_{n_2})=w''
\]
means that

\noindent 1) $\text{deg}(w')<\text{deg}(w'')$;

\noindent 2) or $\text{deg}(w')=\text{deg}(w'')$,
$\text{deg}_{\text{noninv}}(w')<\text{deg}_{\text{noninv}}(w'')$;

\noindent 3) or $\text{deg}(w')=\text{deg}(w'')$,
$\text{deg}_{\text{noninv}}(w')=\text{deg}_{\text{noninv}}(w'')$,
$n_1<n_2$ or $n_1=n_2$, $i_1<i_2$;

\noindent 4) $\text{deg}(w')=\text{deg}(w'')$,
$\text{deg}_{\text{noninv}}(w')=\text{deg}_{\text{noninv}}(w'')$,
$\nu_{n_1i_1}=\nu_{n_2i_2}$ and $u_1=v_1,\ldots,u_{c-1}=v_{c-1}$,
$u_c\prec v_c$.

If in 4) both $w'$ and $w''$ are of the same type
$w'=\nu_T(x_{p_1},\ldots,x_{p_N})$ and
$w''=\nu_T(x_{q_1},\ldots,x_{q_N})$ for $T$ from (\ref{explicit form of f}),
we assume that first
$u_k\prec v_k$ and if $u_k=v_k$, then
$u_1=v_1,\ldots,u_{c-1}=v_{c-1}$, $u_c\prec v_c$. So, without loss
of generality we may assume that $k=1$, i.e., the first coordinate
$x_{j_1}$ of some $w_j$ is noninvariant.

We construct a sequence $f_1,f_2,\ldots$ of $G$-invariants,
starting with $f_1=f$. If in (\ref{explicit form of f}) each $w_j$
has the form
\[
w_j=\nu_T(x_{j_1},\ldots,x_{j_N})=\nu_{n1}(u_{jr_1},\ldots,u_{jr_n}),
\quad u_{jr_s}\in\{X\}_{\Omega},
\]
we define
\[
f_{k+1}=\sum_{j=1}^m\alpha_j\nu_{n1}(u_{jr_1},\ldots,u_{jr_{n-1}},
\nu_{n1}(u_{jr_n},\underbrace{f_k,\ldots,f_k}_{n-1})).
\]
In order to prove that $f_{k+1}$ is $G$-invariant, we use the
$GL_d(K)$-module isomorphism (\ref{module structure for fixed
tree}) which is also a $G$-module isomorphism and define the
$G$-module isomorphism
\[
\varphi_{k+1}:\nu_T(\underbrace{KX,\ldots,KX}_{n-1},\nu_{n1}(KX,\underbrace{f_k,\ldots,f_k}_{n-1}))
\to (KX)^{\otimes n}\otimes (Kf_k)^{\otimes (n-1)}
\]
by
\[
\varphi_{k+1}:\nu_T(x_{j_1},\ldots,x_{j_{N-1}},\nu_{n1}(x_{j_N},f_k,\ldots,f_k))
\to x_{j_1}\otimes\cdots\otimes x_{j_N}\otimes f_k^{\otimes
(n-1)}.
\]
Then
\[
\varphi_{k+1}(f_{k+1})=\sum_{j=1}^m\alpha_j\pi_T(f)\otimes
f_{k+1}^{\otimes (n-1)},
\]
which is $G$-invariant.

If the leading term of $f$ with respect to the introduced
admissible ordering is
\[
\overline{f}=\nu_{n1}(u_1^0,\ldots,u_{n-1}^0,u_n^0),\quad
u_1^0,\ldots,u_{n-1}^0,u_n^0\in\{X\}_{\Omega},
\]
then the $\Omega$-monomial $u_1^0$ depends also on a noninvariant
variable. Also, it is easy to see that the leading term of
$f_{k+1}$ is
\[
\nu_{n1}(u_1^0,\ldots,u_{n-1}^0,\nu_{n1}(u_n^0,\underbrace{\overline{f_k},\ldots,\overline{f_k}}_{n-1})).
\]
Now, let the algebra $K\{X\}_{\Omega}^G$ of $G$-invariants be
finitely generated by some $h_1,\ldots,h_m$. We may assume that the generators are homogeneous. We choose
a sufficiently large $k$ such that
$\text{deg}(f_{k+1})>\text{deg}(h_s)$, $s=1,\ldots,m$. Since
$f_{k+1}$ belongs to the $\Omega$-subalgebra of $K\{X\}_{\Omega}$
generated by $h_1,\ldots,h_m$, the leading term
$\overline{f_{k+1}}$ of $f_{k+1}$ can be expressed as an
$\Omega$-monomial of the leading terms $\overline{h_s}$ and is
different from them. Hence
\[
\overline{f_{k+1}}=\nu_{n1}(u_1^0,\ldots,u_{n-1}^0,
\nu_{n1}(u_n^0,\underbrace{\overline{f_k},\ldots,\overline{f_k}}_{n-1}))
=\nu_{n_1i_1}(v_1,\ldots,v_{n_1}),
\]
where each $v_p$ is the leading term of an element of the
$\Omega$-subalgebra generated by $h_1,\ldots,h_m$. This implies
that $\nu_{n1}=\nu_{n_1i_1}$ and
$u_1^0=v_1,\ldots,u_{n-1}^0=v_{n-1}$,
$\nu_{n1}(u_n^0,\overline{f_k},\ldots,\overline{f_k})=v_n$. Hence
$u_1^0$ is a leading term of a $G$-invariant element.
This is impossible because $\text{deg}(f)>\text{deg}(u_1^0)$ and
$\text{deg}(f)$ is the minimal degree of an invariant depending
not only on $G$-invariant variables.
\end{proof}

As an illustration of the proof, continuing Example
\ref{Weitzenboeck 2 x 2}, we can now start with

\[
f_1=f=f(x_1,x_2,x_3,x_4)=\nu_3(x_3,x_1,x_2)-\nu_3(x_1,x_1,x_4)
\]
and construct
\[
f_{k+1}=\nu_3(x_3,x_1,\nu_3(x_2,f_k,f_k))-\nu_3(x_1,x_1,\nu_3(x_4,f_k,f_k)).
\]

\begin{remark}
The main steps of the description of Koryukin \cite{Ko1} of the finitely generated algebras of invariants
$K\langle X\rangle^G$ can be applied also to the case of the free
binary algebra $K\{X\}$. Tracing his proof we obtain that if
$K\{X\}^G$ is finitely generated and $K\{X\}^G\subset K\{x_1,\ldots,x_{d_1}\}$, where $d_1$ is minimal with this
property (with respect to all linear changes of the system of generators
of $K\{X\}$), then $G$ acts on $Kx_1\oplus\cdots\oplus Kx_{d_1}$
as a finite cyclic group by scalar multiplications. If the order of this cyclic group is equal to $r$,
then the final arguments of our proof of Theorem \ref{nonfinitely generated invariants}
show that the elements $x_1^{r-1}(x_1x_1^{rk})$, $k=1,2,\ldots$,
do not belong to any finitely generated subalgebra of $K\{X\}^G$.
\end{remark}

\begin{corollary}
Let $\Omega\not=\emptyset$, i.e., $K\{X\}_{\Omega}$ is not the
vector space $KX$ with trivial multiplication and let
$G\not=\langle e\rangle$ be a finite subgroup of $GL_d(K)$. If the
algebra of invariants $K\{X\}_{\Omega}^G$ is nonzero, then it is
not finitely generated.
\end{corollary}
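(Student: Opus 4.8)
The plan is to reduce the corollary to Theorem~\ref{nonfinitely generated invariants}: it suffices to exhibit a single nonobvious $G$-invariant in $K\{X\}_{\Omega}$. Since $GL_d(K)$ acts faithfully on $KX$ and $G\neq\langle e\rangle$, the fixed space $(KX)^G$ is proper, so $d_0:=\dim_K(KX)^G<d$. As $K$ has characteristic $0$ and $G$ is finite, Maschke's theorem provides a $G$-invariant complement to $(KX)^G$; after a linear change of the free generators $x_{d_0+1},\dots,x_d$ — which does not affect which $\Omega$-polynomials are obvious invariants, as these only involve $x_1,\dots,x_{d_0}$ — I may assume $V:=Kx_{d_0+1}\oplus\cdots\oplus Kx_d$ is a $G$-submodule, so $KX=(KX)^G\oplus V$ as $G$-modules and $V\neq 0$.

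I would then split into two cases. If $d_0=0$ the only obvious invariant is $0$, so any nonzero element of $K\{X\}_{\Omega}^G$ — which exists by hypothesis — is automatically nonobvious, and Theorem~\ref{nonfinitely generated invariants} applies at once. If $d_0\geq 1$, I construct a nonobvious invariant directly. Choose $0\neq v\in V$ and an enumeration $g_1,\dots,g_{|G|}$ of $G$, and set
\[
n(v)=\sum_{\sigma\in S_{|G|}}g_{\sigma(1)}(v)\otimes\cdots\otimes g_{\sigma(|G|)}(v)\ \in\ V^{\otimes|G|}\subseteq (KX)^{\otimes|G|}.
\]
The three facts to check are: $n(v)\in V^{\otimes|G|}$ because $V$ is $G$-stable; $n(v)$ is $G$-invariant because left multiplication by $h\in G$ only permutes the tuple $(g_1(v),\dots,g_{|G|}(v))$, which the outer sum over $S_{|G|}$ absorbs; and $n(v)\neq 0$ because its image under the natural map $(KX)^{\otimes|G|}\to S^{|G|}(KX)$ onto the $|G|$-th symmetric power equals $|G|!\,\prod_{g\in G}g(v)$, which is nonzero since $S(KX)$ is an integral domain and each $g(v)\neq 0$.

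Finally I would carry $n(v)$ into $K\{X\}_{\Omega}$ along a tree. Because $\Omega\neq\emptyset$ there are $\Omega$-trees with arbitrarily many leaves; fix one, $T$, with $N\geq|G|$ leaves, and using the $G$-module isomorphism $\pi_T$ of (\ref{module structure for fixed tree}) put
\[
f=\pi_T^{-1}\bigl(n(v)\otimes x_1^{\otimes(N-|G|)}\bigr)\ \in\ \nu_T(KX,\dots,KX)\subseteq K\{X\}_{\Omega}.
\]
Then $f$ is $G$-invariant, being the image under a $G$-isomorphism of a tensor product of $G$-invariant vectors ($n(v)$ and $x_1$, the latter because $x_1\in(KX)^G$); $f\neq 0$ since $n(v)\neq 0$; and $f$ is nonobvious, for expanding $n(v)$ in the basis $x_{d_0+1},\dots,x_d$ of $V$ shows that every $\Omega$-monomial occurring in $f$ carries a noninvariant variable in one of its first $|G|$ leaves, so $f\notin K\{x_1,\dots,x_{d_0}\}_{\Omega}$. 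Theorem~\ref{nonfinitely generated invariants} then yields the conclusion; note that the hypothesis $K\{X\}_{\Omega}^G\neq 0$ is needed only when $d_0=0$, since for $d_0\geq 1$ it holds trivially ($x_1$ is an invariant).

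The step requiring the most care is the construction of $f$ in the case $d_0\geq 1$: the verification of the three properties of $n(v)$, together with the observation that the tree $T$ and the padding by $x_1^{\otimes(N-|G|)}$ are there because the set of leaf-numbers of $\Omega$-trees need not contain $|G|$ itself, so one must pad up to an admissible value $N$ using the invariant generator $x_1$ — which is exactly why the case $d_0=0$ has to be handled separately and trivially. (If one prefers to avoid symmetric powers, the same tree construction can instead be fed a nonzero element of the trivial $G$-submodule $(\Lambda^{\dim V}V)^{\otimes|G|}\subseteq V^{\otimes(|G|\dim V)}$, trivial because $\det\circ\rho_V$ takes values among $|G|$-th roots of unity.)
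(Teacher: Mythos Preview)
Your proof is correct and follows the same overall architecture as the paper's: reduce to Theorem~\ref{nonfinitely generated invariants} by exhibiting a nonobvious invariant, use Maschke to split off a $G$-stable complement $V$ of $(KX)^G$, produce a nonzero $G$-invariant in some tensor power of $V$, and transport it into $K\{X\}_\Omega$ via $\pi_T^{-1}$ with padding by the invariant generator $x_1$. The one substantive difference lies in how the tensor invariant is obtained. The paper argues that $G$ acts faithfully on $V$ and then invokes the classical fact that the tensor powers of a faithful representation of a finite group contain the trivial representation; this is short to state but is a black-box result. You instead construct the invariant explicitly as the symmetrized orbit tensor $n(v)=\sum_{\sigma}g_{\sigma(1)}(v)\otimes\cdots\otimes g_{\sigma(|G|)}(v)$, with nonvanishing checked by projecting to the symmetric algebra where it becomes $|G|!\prod_{g}g(v)$. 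Your route is more elementary and entirely self-contained, and your parenthetical alternative via $(\Lambda^{\dim V}V)^{\otimes|G|}$ is also valid. The paper's argument is phrased as a proof by contradiction (assume all invariants obvious, derive $0<d_0<d$, then produce a nonobvious one), while yours is direct with the case split $d_0=0$ versus $d_0\geq 1$; these are equivalent reorganizations of the same idea.
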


\begin{proof}
If $K\{X\}_{\Omega}^G$ contains a nonobvious invariant, then we
apply directly Theorem \ref{nonfinitely generated invariants}. Let
us assume that all invariants are obvious and depend on the $G$-invariant variables
$x_1,\ldots,x_{d_0}$. Since $K\{X\}_{\Omega}^G\not=0$ and $G\not=\langle e\rangle$,
we derive that $0<d_0<d$.
We use the well known
fact that the tensor powers of any faithful representation of a
finite group contain all irreducible representations of the group,
including the trivial representation.
Applying the theorem of Maschke, we choose the variables
$x_{d_0+1},\ldots,x_d$ to span a $G$-invariant complement of
$Kx_1\oplus \cdots\oplus Kx_{d_0}$. The representation of $G$ in
$\text{span}\{x_{d_0+1},\ldots,x_d\}$ is faithful. Hence there
exists a $G$-invariant $h\in
(\text{span}\{x_{d_0+1},\ldots,x_d\})^{\otimes p}$ of the form
\[
h(x_{d_0+1},\ldots,x_d)=\sum_{j=1}^m\alpha_jx_{j_1}\otimes\cdots\otimes
x_{j_p},\quad \alpha_j\in K, \quad j_1,\ldots,j_p>d_0.
\]
We choose an $\Omega$-tree $T$ with $N$ leaves, $N>p$, and
consider the associated operation $\nu_T$.
Now we consider the
isomorphism $\pi_T$ from (\ref{module structure for fixed tree}).
The variable $x_1$ is $G$-invariant and the $\Omega$-polynomial
\[
\pi_T^{-1}\left(\left(\sum_{j=1}^m\alpha_jx_{j_1}\otimes\cdots\otimes
x_{j_p}\right)\otimes x_1^{\otimes (N-p)}\right)
\]
\[
=\sum_{j=1}^m\alpha_j\nu_T(x_{j_1},\ldots,x_{j_p},\underbrace{x_1,\ldots,x_1}_{N-p})
\]
is a nonobvious $G$-invariant. This contradiction
completes the proof.
\end{proof}

Let us add in Example \ref{zero algebra of invariants} one more
variable $x_0$ which is fixed by $G$. Then $G$ is generated by the
$(d+1)\times (d+1)$ matrix
\[
g=\left(\begin{matrix}
1&0&\cdots&0\\
0&-1&\cdots&0\\
\vdots&\vdots&\ddots&\vdots\\
0&0&\cdots&-1\\
\end{matrix}\right),
\]
and $f=\nu_3(x_1,x_1,x_0)$ is a nonobvious $G$-invariant because
\[
g(f)=\nu_3(g(x_1),g(x_1),g(x_0))=\nu_3(-x_1,-x_1,x_0)=\nu_3(x_1,x_1,x_0)=g.
\]

The proof of Dicks and Formanek \cite{DF} that the finite cyclic groups
$G$ which act by scalar multiplication are the only groups such that
the algebra $K\langle X\rangle^G$ is finitely generated uses ideas very different
from the proof of Theorem \ref{nonfinitely generated invariants}
given above. Dicks and Formanek proved that for any finite group $G$ the Hilbert series of
$K\langle X\rangle^G$ satisfies
\begin{equation}\label{formula of Dicks and Formanek}
H(K\langle X\rangle^G,t)=\frac{1}{\vert G\vert}\sum_{g\in G}
\frac{1}{1-\text{tr}_{KX}(g)t},
\end{equation}
where $\text{tr}_{KX}(g)$ is the trace of the linear operator $g$ in
the $d$-dimensional vector space $KX$.
This is an analogue of the classical Molien formula
\begin{equation}\label{Molien formula}
H(K[X]^G,t)=\frac{1}{\vert G\vert}\sum_{g\in G}
\frac{1}{\text{det}(1-gt)}.
\end{equation}
Later, Formanek \cite{F1} generalized this result to the case
of any factor algebra $K\langle X\rangle/J$ of $K\langle X\rangle$
modulo a $GL_d(K)$-invariant ideal $J$.
The algebra $K\langle X\rangle/J$ inherits the multigrading
of $K\langle X\rangle$. Let $G$ be any finite subgroup of $GL_d(K)$
and let $\xi_1(g),\ldots,\xi_d(g)$ be the eigenvalues of the
$d\times d$ matrix $g\in G$. Then
\begin{equation}\label{formula of Formanek}
H((K\langle X\rangle/J)^G,t)=\frac{1}{\vert G\vert}\sum_{g\in G}
H(K\langle X\rangle/J,\xi_1(g)t,\ldots,\xi_d(g)t).
\end{equation}
Since the Hilbert series of $K[X]$ and $K\langle X\rangle$ are, respectively,
\[
H(K[X],t_1,\ldots,t_d)=\prod_{j=1}^d\frac{1}{1-t_j},
\]
\[
H(K\langle X\rangle,t_1,\ldots,t_d)=\frac{1}{1-(t_1+\cdots+t_d)},
\]
and since
\[
\prod_{j=1}^d(1-\xi_j(g)t)=\text{det}(1-gt),
\]
\[
\sum_{j=1}^d\xi_j(g)t=\text{tr}(gt),
\]
the formula (\ref{formula of Formanek}) gives immediately
(\ref{Molien formula}) and (\ref{formula of Dicks and Formanek}).
Applied to invariants of free $\Omega$-algebras, (\ref{formula of Formanek}) implies the following.

\begin{proposition}\label{Omega version of Formanek formula}
Let $G$ be a finite subgroup of $GL_d(K)$. Then
\begin{equation}\label{Hilbert series of invariants}
H(K\{X\}_{\Omega}^G,t)=\frac{1}{\vert G\vert}\sum_{g\in G}
H(K\{X\}_{\Omega},\xi_1(g)t,\ldots,\xi_d(g)t),
\end{equation}
where $\xi_1(g),\ldots,\xi_d(g)$ are the eigenvalues of the $d\times d$ matrix $g\in G$.
\end{proposition}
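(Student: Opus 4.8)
The plan is to reduce the statement to the Dicks--Formanek formula (\ref{formula of Dicks and Formanek}) for the free associative algebra, using the module-theoretic dictionary already established in Proposition \ref{relation with invariants of free associative algebras}, and then to rewrite the result in the multigraded form (\ref{Hilbert series of invariants}) by means of Proposition \ref{Hilbert series of free algebra}(ii). Throughout, write $H(K\langle X\rangle^G,t)=\sum_{m\geq 1}a_mt^m$ and $H(K\{x\}_{\Omega},t)=\sum_{m\geq 1}b_mt^m$.

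The first step is the single-variable identity $H(K\{X\}_{\Omega}^G,t)=\sum_{m\geq 1}a_mb_mt^m$, which is exactly Proposition \ref{relation with invariants of free associative algebras}(ii). It reflects that, as a graded $GL_d(K)$-module, $K\{X\}_{\Omega}=\bigoplus_T\nu_T(KX,\ldots,KX)$, where for a tree $T$ with $N$ leaves the summand $\nu_T(KX,\ldots,KX)$ is $G$-isomorphic, via $\pi_T$ of (\ref{module structure for fixed tree}), to $(KX)^{\otimes N}=K\langle X\rangle^{(N)}$, and there are $b_N$ such trees in degree $N$; taking $G$-invariants in each summand and summing contributes $b_Na_N$ to the dimension in degree $N$.

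Next, by the Dicks--Formanek formula (\ref{formula of Dicks and Formanek}) one has $H(K\langle X\rangle^G,t)=\frac{1}{|G|}\sum_{g\in G}(1-\text{tr}_{KX}(g)t)^{-1}$, hence $a_m=\frac{1}{|G|}\sum_{g\in G}(\text{tr}_{KX}(g))^m$ for every $m$. Inserting this into the identity above and interchanging the finite sum over $G$ with the formal sum over $m$ gives
\[
H(K\{X\}_{\Omega}^G,t)=\frac{1}{|G|}\sum_{g\in G}\sum_{m\geq 1}b_m(\text{tr}_{KX}(g))^mt^m=\frac{1}{|G|}\sum_{g\in G}H(K\{x\}_{\Omega},\text{tr}_{KX}(g)\,t).
\]
Finally, since each $x_j$ has multidegree $e_j$, the generating function of $X$ is $G(X,t_1,\ldots,t_d)=t_1+\cdots+t_d$, so Proposition \ref{Hilbert series of free algebra}(ii) yields $H(K\{X\}_{\Omega},t_1,\ldots,t_d)=H(K\{x\}_{\Omega},t_1+\cdots+t_d)$; specializing $t_j\mapsto\xi_j(g)t$ is a legitimate substitution of formal power series because the total-degree-$N$ part of the left-hand side is homogeneous of degree $N$. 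Using $\xi_1(g)+\cdots+\xi_d(g)=\text{tr}_{KX}(g)$ converts $H(K\{x\}_{\Omega},\text{tr}_{KX}(g)t)$ into $H(K\{X\}_{\Omega},\xi_1(g)t,\ldots,\xi_d(g)t)$, and the displayed formula becomes precisely (\ref{Hilbert series of invariants}).

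I do not expect a genuine obstacle here. The eigenvalues $\xi_j(g)$ lie in an algebraic closure of $K$, so the intermediate manipulations are carried out with coefficients in ${\mathbb C}$, yet the final series has nonnegative integer coefficients; and the only points requiring a moment's care---the interchange of the two summations and the validity of the substitution $t_j\mapsto\xi_j(g)t$---are routine, since everything in sight is graded with finite-dimensional homogeneous components. (Alternatively one may bypass (\ref{formula of Dicks and Formanek}) and apply Formanek's formula (\ref{formula of Formanek}) with $J=0$ together with $H(K\langle X\rangle,t_1,\ldots,t_d)=(1-(t_1+\cdots+t_d))^{-1}$.)
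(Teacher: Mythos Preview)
Your proof is correct. The argument is sound at every step: Proposition~\ref{relation with invariants of free associative algebras}(ii) gives the single-variable identity, the Dicks--Formanek formula (\ref{formula of Dicks and Formanek}) furnishes $a_m=\frac{1}{|G|}\sum_{g\in G}(\text{tr}_{KX}(g))^m$, and Proposition~\ref{Hilbert series of free algebra}(ii) with $G(X,t_1,\ldots,t_d)=t_1+\cdots+t_d$ converts the one-variable series $H(K\{x\}_{\Omega},\text{tr}_{KX}(g)t)$ into the multigraded form.

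Your route differs from the paper's in packaging rather than substance. The paper re-derives the key character-theoretic step from scratch: it invokes the Reynolds operator $\phi=\frac{1}{|G|}\sum_{g\in G}g$, the identity $\dim(W^G)=\text{tr}_W(\phi)$, and the fact that for a diagonalizable $g$ acting on a multihomogeneous $GL_d(K)$-submodule $W$ one has $\text{tr}_W(g)=H(W,\xi_1(g),\ldots,\xi_d(g))$; it then transfers this to $K\{X\}_{\Omega}$ by observing that each homogeneous component is a direct sum of copies of $K\langle X\rangle^{(k)}$. You instead treat Proposition~\ref{relation with invariants of free associative algebras}(ii) and formula~(\ref{formula of Dicks and Formanek}) as black boxes and assemble them via formal power series manipulations. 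Your approach is shorter and more modular, since both ingredients are already established in the paper; the paper's approach is more self-contained and makes the underlying representation theory explicit. Both ultimately rest on the same $GL_d(K)$-module isomorphism $\nu_T(KX,\ldots,KX)\cong(KX)^{\otimes N}$.
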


\begin{proof}
We recall the main steps of the proof of \cite{F1}, see also Section 6.3 of \cite{Dr3},
where the proof of (\ref{formula of Formanek}) is given as a sequence
of exercises. The first fact we need is that for a finite dimensional vector space $W$ and
a finite subgroup $G$ of $GL(W)$, the vector space of $G$-invariants in $W$,
i.e., $W^G=\{w\in W\mid g(w)=w,\quad g\in G\}$
coincides with the image of the Reynolds operator
$\phi:W\to W$ defined by
\[
\phi(w)=\frac{1}{\vert G\vert}\sum_{g\in G}g(w)
\]
and $\text{dim}(W^G)=\text{tr}_W(\phi)$, the trace of $\phi$ acting on $W$.
Further, if $g$ is a diagonalizable matrix acting on $KX$
(which is true when $g$ is of finite order), and $W$ is a $GL_d(K)$-invariant
multihomogeneous finite dimensional vector subspace of $K\langle X\rangle$,
then the Hilbert series $H(W,t_1,\ldots,t_d)$ of $W$ is a symmetric polynomial
in $t_1,\ldots,t_d$ and the trace of $g$ acting on $W$ is
\begin{equation}\label{trace of g acting on W}
\text{tr}_W(g)=H(W,\xi_1(g),\ldots,\xi_d(g)).
\end{equation}
In particular, for the homogeneous components $W^{(k)}$ of degree $k$
\begin{equation}\label{trace of g acting on homogeneous component}
\text{tr}_{W^{(k)}}(g)t^k=H(W^{(k)},\xi_1(g)t,\ldots,\xi_d(g)t).
\end{equation}
Finally, the equations
(\ref{trace of g acting on W}) and
(\ref{trace of g acting on homogeneous component})
imply for the graded subspace of the $G$-invariants in $W$
\[
H(W^G,t)=\sum_{k\geq 0}\text{dim}((W^G)^{(k)})t^k
=\sum_{k\geq 0}\text{tr}_{W^{(k)}}(\phi)t^k
\]
\[
=\frac{1}{\vert G\vert}
\sum_{g\in G}\sum_{k\geq 0}\text{tr}_{W^{(k)}}(g)t^k
=\frac{1}{\vert G\vert}\sum_{g\in G}H(W,\xi_1(g)t,\ldots,\xi_d(g)t).
\]
Since each homogeneous component $K\{X\}_{\Omega}^{(k)}$ of
$K\{X\}_{\Omega}$ is isomorphic as a $GL_d(K)$-module to a direct sum
of several copies of $K\langle X\rangle^{(k)}$, the equations
(\ref{trace of g acting on W}) and
(\ref{trace of g acting on homogeneous component})
hold also for the finite dimensional $GL_d(K)$-submodules $W$ of $K\{X\}_{\Omega}$.
This implies the formula (\ref{Hilbert series of invariants}).
\end{proof}

\begin{example}\label{binary trees with odd branches}
Let $G=\{e,-e\}$ act on the free nonassociative binary algebra $K\{x\}$,
where $-e$ changes the sign of $x$. As in Example \ref{zero algebra of invariants},
$K\{x\}^G$ is spanned by all homogeneous monomials of
even degree. Proposition \ref{Omega version of Formanek formula} gives that
\[
H(K\{x\}^G,t)=\frac{1}{2}\left(H(K\{x\},t)+H(K\{x\},-t)\right)
\]
(which can be seen also directly).
One can replace $H(K\{x\},t)$ with its explicit form
(\ref{generating function for Catalan numbers}),
as the generating function of the Catalan numbers,
and obtain
\[
H(K\{x\}^G,t)=\frac{1}{4}\left(2-\sqrt{1-4t}-\sqrt{1+4t}\right).
\]
Using the formula (\ref{generating function of free generators of subalgebras})
we derive for the generating function of any homogeneous set $Y$ of free
generators of $K\{x\}^G$
\begin{equation}\label{generators of even invariants}
G(Y,t)=\frac{1-\sqrt{1-16t^2}}{8}=\frac{1}{4}H(K\{x\},4t^2).
\end{equation}
Instead, we may proceed in the following way, with possible generalizations. Let
$H=H(K\{x\},t)=H_0+H_1$, where
\[
H_0=\frac{1}{2}(H(t)+H(-t)),\quad H_1=\frac{1}{2}(H(t)-H(-t))
\]
are, respectively, the even and the odd components of the series $H$.
Since
\[
0=H^2-H+t=(H_0^2+H_1^2-H_0)+(2H_0H_1-H_1+t),
\]
we separate the even and odd parts of this equation and obtain
\[
H_0^2+H_1^2-H_0=2H_0H_1-H_1+t=0,\quad H_1=\frac{t}{1-2H_0},
\]
\[
H_0^2+\frac{t^2}{(1-2H_0)^2}-H_0=0,\quad
(H_0^2-H_0)(1+4(H_0^2-H_0))+t^2=0.
\]
Using that $H_0^2-H_0+G(Y,t)=0$, we derive
\[
-G(Y,t)(1-4G(Y,t))+t^2=0,\quad (4G(Y,t))^2-(4G(Y,t))+4t^2=0.
\]
Hence, by (\ref{equation for u and v}) we obtain $4G(Y,t)=H(4t^2)$, i.e.,
(\ref{generators of even invariants}) and
\begin{equation}\label{formula for odd trees}
g_{2k}=\frac{1}{4^{k-1}}c_k,\quad k\geq 1,
\end{equation}
where $g_{2k}$ are the nonzero coefficients of the generating function $G(Y,t)$.

Since $K\{x\}^G$ is spanned by all nonassociative monomials of even degree,
the monomials of the form $uv$, where both $u,v$ are of odd degree form a
free generating set of $K\{x\}^G$. Applying the Stirling formula to $c_{2k}$ and $g_{2k}$
we obtain
\[
c_{2k}=\frac{1}{2k}\binom{4k-2}{2k-1}=\frac{(2(2k-1))!}{2k((2k-1)!)^2}
\approx \frac{4^{2k}}{8k\sqrt{\pi(2k-1)}},
\]
\[
g_{2k}=\frac{4^{k-1}(2(k-1))!}{k((k-1)!)^2}\approx \frac{4^{2k}}{16k\sqrt{\pi(k-1)}},
\]
\[
\frac{g_{2k}}{c_{2k}}\approx \frac{1}{2}\sqrt{\frac{2k-1}{k-1}},\quad
\lim_{k\to\infty}\frac{g_{2k}}{c_{2k}}=\frac{\sqrt{2}}{2}.
\]
Hence the quotient between the number of nonassociative monomials which are products of two monomials
of odd degree to the number of all monomials of the same degree tends to $\sqrt{2}/2$. For example,
\[
c_2=1,\quad g_2=1,\quad \frac{g_2}{c_2}=1,
\]
\[
c_4=5,\quad g_4=4,\quad \frac{g_4}{c_4}=0.8,
\]
\[
c_6=42,\quad g_6=32,\quad \frac{g_6}{c_6}\approx 0.761905,
\]
\[
c_8=429,\quad g_8=320,\quad \frac{g_8}{c_8}\approx 0.745921,
\]
\[
c_{10}=4862,\quad g_{10}=3584,\quad \frac{g_{10}}{c_{10}}\approx 0.737145,
\]
\[
c_{12}=58786,\quad g_{12}=43008,\quad \frac{g_{12}}{c_{12}}\approx 0.731603,
\]
\[
c_{14}=742900,\quad g_{14}=540672,\quad \frac{g_{14}}{c_{14}}\approx 0.727786,
\]
\[
c_{16}=9694845,\quad g_{16}=7028736,\quad \frac{g_{16}}{c_{16}}\approx 0.724997,
\]
\[
\frac{g_{20}}{c_{20}}\approx 0.721197,
\quad \frac{g_{30}}{c_{30}}\approx 0.716308,
\quad \frac{g_{40}}{c_{40}}\approx 0.713938,
\]
\[
\frac{g_{50}}{c_{50}}\approx 0.712539,
\quad \frac{g_{100}}{c_{100}}\approx 0.709790,
\]
which is very close to
\[
\frac{\sqrt{2}}{2}\approx 0.707105.
\]
The monomials which are products of two monomials of odd degree
can be labeled by  planar binary rooted trees with two branches
with an odd number of leaves for each branch. Hence there are much
more of such trees (about
\[
\frac{\sqrt{2}/2}{1-\sqrt{2}/2}\approx 2.41420
\]
times) than of  planar binary rooted trees with the same number of
leaves which have two branches with an even number of leaves.
\end{example}

In the case of infinite groups $G$ the Molien formula (\ref{Molien formula}) has no formal sense.
Nevertheless, if $G$ is compact, one can define
Haar measure on $G$, replace the sum with an integral
and obtain the Molien-Weyl formula for the Hilbert series of the algebra of
invariants, see \cite{We1, We2}. The analogue of (\ref{formula of Dicks and Formanek})
for $G$ infinite is given by Almkvist, Dicks and Formanek \cite{ADF}.
For other applications of the Molien-Weyl formula for objects related with noncommutative algebra
see also \cite{F1}, \cite{Dr2} and the books \cite{F2}, \cite{DrF}.
We shall consider such applications in the next section.

Without presenting a comprehensive survey, we shall mention several results
of action of other objects, different from groups, in the spirit of invariant theory.
Instead of invariants of linear groups one may consider constants of derivations.
A theorem of Jooste \cite{J} and Kharchenko \cite{Kh1} gives that
for a Lie algebra $D$ of linear derivations
the algebra of constants
\[
K\langle X\rangle^D=\{f\in K\langle X\rangle\mid \delta(f)=0,\delta\in D\}
\]
is free again. See also Koryukin \cite{Ko2} and Ferreira and
Murakami \cite{FM1} for the problem of finite generation of
$K\langle X\rangle^D$. One may study also invariants of Hopf
algebras acting on $K\langle X\rangle$. We shall mention only
\cite{FMP} and \cite{FM2} which show that the algebra of
invariants of $K\langle X\rangle$ under the linear action of a
Hopf algebra $H$ is free and, under some mild restrictions on $H$,
the algebra $K\langle X\rangle^H$ is finitely generated only if
the action of $H$ is scalar. It is a natural problem to study
algebras of constants of derivations and invariants of Hopf
algebras also in the case of free $\Omega$-algebras.

\section{Weitzenb\"ock derivations, special linear groups and elliptic integrals}

As in the previous section, we assume that $K$ is a field of characteristic 0 and
$X=\{x_1,\ldots,x_d\}$. Almkvist, Dicks and Formanek \cite{ADF}
studied invariants of the special linear group $SL_p(K)$ and the
unitriangular group $UT_p(K)$ acting on the free associative algebra
$K\langle X\rangle$. They expressed the Hilbert series of the algebra
of invariants in terms of multiple integrals. We shall transfer these results
to the case of $K\{X\}_{\Omega}$. We shall consider in detail the case $p=2$ only.
Also, instead for the unitriangular group $UT_2(K)$ we shall state
the results for Weitzenb\"ock derivations and unipotent actions of the infinite cyclic group.

For details on representation theory of general linear groups see
e.g. the books by Macdonald \cite{Mc} and Weyl \cite{We2}. Let
$W(\lambda)=W(\lambda_1,\lambda_2)$ be the irreducible
$GL_2(K)$-module corresponding to the partition
$\lambda=(\lambda_1,\lambda_2)$. The role of the character of
$W(\lambda)$ is played by the Schur function
$s_{\lambda}(u_1,u_2)$. This means that if the matrix $g\in
GL_2(K)$ has eigenvalues $\xi_1,\xi_2$, then $g$ acts as a linear
operator on $W(\lambda)$ with trace
$\text{tr}_{W(\lambda)}(g)=s_{\lambda}(\xi_1,\xi_2)$. In the case
of two variables $s_{\lambda}(u_1,u_2)$ has the simple form
\[
s_{\lambda}(u_1,u_2)=u_1^{\lambda_1}u_2^{\lambda_2}+u_1^{\lambda_1-1}u_2^{\lambda_2+1}+\cdots
+u_1^{\lambda_2+1}u_2^{\lambda_1-1}+u_1^{\lambda_2}u_2^{\lambda_1}
\]
\[
=(u_1u_2)^{\lambda_2}\frac{u_1^{\lambda_1-\lambda_2+1}-u_2^{\lambda_1-\lambda_2+1}}{u_1-u_2}.
\]
The Schur functions form a basis of the vector space of symmetric polynomials in $K[u_1,u_2]$.

Recall that a linear operator $\delta$ of the $K$-algebra $R$ is called a derivation if
$\delta(uv)=\delta(u)v+u\delta(v)$ for all $u,v\in R$. Similarly,
the linear operator $\delta$ is a derivation of the $\Omega$-algebra $R$
if
\begin{equation}\label{definition of derivation}
\delta(\nu_{ni}(v_1,\ldots,v_n))
=\sum_{j=1}^n\nu_{ni}(v_1,\ldots,\delta(v_j),\ldots,v_n)
\end{equation}
for all $v_j\in R$ and all $\nu_{ni}\in \Omega$. The derivation is
locally nilpotent if for any $v\in R$ there is a $k$ such that
$\delta^k(v)=0$. If $\delta$ is a locally nilpotent derivation,
then the exponential of $\delta$
\[
\exp(\delta)=1+ \frac{\delta}{1!}+\frac{\delta^2}{2!}+\cdots
\]
is well defined and is an automorphism of $R$. The kernel
$\ker(\delta)=R^{\delta}$ of $\delta$
is called the algebra of constants of $\delta$. It coincides with the algebra
$R^{\exp(\delta)}$ of the fixed points of the automorphism
$\exp(\delta)$. Every mapping
$\delta:X\to K\{X\}_{\Omega}$ can be extended to a derivation of $K\{X\}_{\Omega}$:
If $v_1,\ldots,v_n$ are monomials in $\{X\}_{\Omega}$, then we define $\delta(\nu_{ni}(v_1,\ldots,v_n))$
inductively by (\ref{definition of derivation}).

Let $g\in GL_d(K)$ be a
unipotent linear operator acting on the vector space
$KX=Kx_1\oplus\cdots\oplus Kx_d$. By the
classical theorem of Weitzenb\"ock \cite{W}, the (commutative and associative) algebra of
invariants
\[
K[X]^g=\{f\in K[X]\mid
f(g(x_1),\ldots,g(x_d))=f(x_1,\ldots,x_d)\}
\]
is finitely generated. All eigenvalues of $g$ are equal to 1 and, up to a change
of the basis, $g$ is determined uniquely by its Jordan normal
form. Hence, for each fixed $d$ we may consider only a finite number of
linear unipotent operators.
Equivalently, we may consider the
linear locally nilpotent derivation $\delta=\log g$ called a
Weitzenb\"ock derivation. Clearly, the algebra of invariants $K[X]^g$
coincides with the algebra of constants $K[X]^{\delta}\ (=\ker(\delta)$).
See the book of Nowicki \cite{N} for
concrete generators of $K[X]^{\delta}$ for small $d$
and the book by Freudenburg \cite{Fr} for general information on
locally nilpotent derivations of polynomial algebras. The paper by Drensky
and Gupta \cite{DrG} deals with Weitzenb\"ock derivations for free
and relatively free associative and Lie algebras.
We present a short account on the properties of the algebra of
constants $K\{X\}_{\Omega}^{\delta}$ and show how to calculate the Hilbert
series of $K\{X\}_{\Omega}^{\delta}$.
The proofs are based on the description of the invariants of the
group of unitriangular matrices given by De Concini, Eisenbud and
Procesi \cite{DEP} and the work of Almkvist, Dicks and Formanek
\cite{ADF}. We assume that $\delta$ is a linear locally nilpotent
derivation. It acts on $KX$ as a nilpotent linear operator, with Jordan normal form
consisting of $k$ cells of size $n_1+1,\ldots,n_k+1$, respectively, and $X$
is a Jordan basis of $\delta$. Hence either $\delta(x_j)=x_{j-1}$ or
$\delta(x_j)=0$, $j=1,\ldots,d$.

We equip $KX$ with a $GL_2(K)$-module structure in the following
way. If $X_r=\{x_{j_0},x_{j_0+1},\ldots,x_{j_0+n_r}\}$ is the part
of the basis $X$ corresponding to the $r$-th Jordan cell of
$\delta$, we assume that $GL_2(K)$ acts on $KX_r$ as on the
$GL_2(K)$-module of commutative and associative polynomials,
homogeneous of degree $n_r$, in two variables $x,y$: If $g\in
GL_2(K)$ and
\[
g(x^{n_r-m}y^m)=\sum_{q=0}^{n_r}\alpha_{qm}x^{n_r-q}y^q
\]
for some $\alpha_{qm}\in K$, then
\[
g(x_{j_0+m})=\sum_{q=0}^{n_r}\alpha_{qm}x_{j_0+q}.
\]
Hence $KX$ is isomorphic to the direct sum
$W(n_1,0)\oplus\cdots\oplus W(n_k,0)$ as $GL_2(K)$-module.
 We extend the
action of $GL_2(K)$ diagonally to $K\{X\}_{\Omega}$. Then, see
\cite{DEP} and \cite{ADF}, each irreducible $GL_2(K)$-sub\-module
$W(\lambda_1,\lambda_2)$ of $K\{X\}_{\Omega}$ contains a
one-dimensional $\delta$-constant subspace and the algebra
$K\{X\}_{\Omega}^{\delta}$ is spanned by these subspaces. We
define on $K\{X\}_{\Omega}$ a ${\mathbb Z}^3$-grading assuming
that the degree of $x_{j_0+m}$ from $X_r$ is equal to
$(n_r-m,m,1)$ and consider the Hilbert series
\begin{equation}\label{Hilbert series for Weitzenboeck action}
H_{\delta}(K\{X\}_{\Omega},u_1,u_2,z)
=H(K\{x\}_{\Omega},z\sum_{q=1}^ks_{(n_r,0)}(u_1,u_2)).
\end{equation}
It is obtained from the Hilbert series
$H(K\{X\}_{\Omega},t_1,\ldots,t_d)$ by replacing the variables
$t_1,t_2,\ldots,t_d$ respectively by
\[
u_1^{n_1}z,u_1^{n_1-1}u_2z,
\ldots,u_1u_2^{n_1-1}z,u_2^{n_1}z,\ldots,
\]
\[
u_1^{n_k}z,u_1^{n_k-1}u_2z,
\ldots,u_1u_2^{n_k-1}z,u_2^{n_k}z.
\]
The variables $u_1,u_2$ take into account the bigrading related to
the $GL_2(K)$-action and the extra variable $z$ counts the usual
grading.

The function $H_{\delta}(K\{X\}_{\Omega},u_1,u_2,z)$
is symmetric in $u_1,u_2$. The coefficient of its linear in $z$
component is equal to
\[
\sum_{i=1}^ks_{(n_i,0)}(u_1,u_2)
\]
which is the character of the $GL_2(K)$-module $KX$.

Hence $H_{\delta}(K\{X\}_{\Omega},u_1,u_2,z)$ is
the character of the $GL_2(K)$-module $K\{X\}_{\Omega}$.
By \cite{ADF}, this means that if
\[
H_{\delta}(K\{X\}_{\Omega},u_1,u_2,z)=
\sum_{q\geq 1}\left(\sum_{\lambda\vdash q}
m_{\lambda}s_{\lambda}(u_1,u_2)\right)z^q,
\]
then the homogeneous component of degree $q$ decomposes as
\begin{equation}\label{homogeneous component of free algebra}
K\{X\}_{\Omega}^{(q)}=\bigoplus_{\lambda\vdash q} m_{\lambda}W(\lambda_1,\lambda_2).
\end{equation}

\begin{theorem}\label{Weitzenboeck}
Let $\delta$ be a linear locally nilpotent derivation of
$K\{X\}_{\Omega}$, $X=\{x_1,\ldots,x_d\}$, which, when acting on $KX$,
has a Jordan normal form consisting
of $k$ cells of size $n_1+1,\ldots,n_k+1$, respectively.
Then the Hilbert series of the algebra of constants
$K\{X\}_{\Omega}^{\delta}$ is given by
\begin{equation}\label{Hilbert series of Weitzenboeck constants}
H(K\{X\}_{\Omega}^{\delta},z)
=2\int_0^1\cos^2(\pi u)H_{\delta}(K\{X\}_{\Omega},e^{2\pi iu},e^{-2\pi iu},z)du,
\end{equation}
where $H_{\delta}(K\{X\}_{\Omega},u_1,u_2,z)$ is defined in
\text{\rm (\ref{Hilbert series for Weitzenboeck action})}. Equivalently,
\begin{equation}\label{compact form of Hilbert series of Weitzenboeck constants}
H(K\{X\}_{\Omega}^{\delta},z)
=2\int_0^1\cos^2(\pi u)H\left(K\{x\}_{\Omega},
z\sum_{i=1}^k\frac{\sin(2\pi (n_i+1)u)}{\sin(2\pi u)}\right)du.
\end{equation}
\end{theorem}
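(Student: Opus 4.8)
The plan is to reduce the computation of $H(K\{X\}_{\Omega}^{\delta},z)$ to one elementary trigonometric integral identity. First I would invoke the result quoted just before the theorem (due to De Concini--Eisenbud--Procesi \cite{DEP} and Almkvist--Dicks--Formanek \cite{ADF}): every irreducible $GL_2(K)$-submodule $W(\lambda_1,\lambda_2)$ occurring in $K\{X\}_{\Omega}$ meets $\ker\delta$ in exactly a one-dimensional subspace, and $K\{X\}_{\Omega}^{\delta}$ is the direct sum of all of these. Combined with the decomposition (\ref{homogeneous component of free algebra}) this gives, for every $q$,
\[
\dim\bigl(K\{X\}_{\Omega}^{\delta}\bigr)^{(q)}=\sum_{\lambda\vdash q}m_{\lambda},
\]
where the nonnegative integers $m_\lambda$ (only finitely many nonzero for fixed $q$) are those appearing in $H_{\delta}(K\{X\}_{\Omega},u_1,u_2,z)=\sum_{q\ge 1}\bigl(\sum_{\lambda\vdash q}m_\lambda s_\lambda(u_1,u_2)\bigr)z^q$. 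So it suffices to find a measure on $[0,1]$ against which every restricted Schur function integrates to $1$.

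The key lemma I would establish is
\[
2\int_0^1\cos^2(\pi u)\,\frac{\sin(2\pi(j+1)u)}{\sin(2\pi u)}\,du=1\qquad\text{for every integer }j\ge 0.
\]
Here I would write $2\cos^2(\pi u)=1+\tfrac12 e^{2\pi iu}+\tfrac12 e^{-2\pi iu}$ and use the finite geometric sum $\sin(2\pi(j+1)u)/\sin(2\pi u)=\sum_{l=0}^{j}e^{2\pi i(j-2l)u}$; in particular the latter is a genuine trigonometric polynomial, with no singularity at $u=0,\tfrac12,1$. Since $\int_0^1 e^{2\pi inu}\,du$ equals $1$ for $n=0$ and $0$ otherwise, the integral simply collects the Fourier modes of frequency $-1,0,1$: for $j$ even only the mode $j-2l=0$ occurs, contributing $1$, while for $j$ odd the modes $j-2l=\pm 1$ each contribute $\tfrac12$. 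In both cases the value is $1$. Because the two-variable Schur function satisfies $s_{(\lambda_1,\lambda_2)}(e^{2\pi iu},e^{-2\pi iu})=\sin(2\pi(\lambda_1-\lambda_2+1)u)/\sin(2\pi u)$ (set $u_1u_2=1$ in the formula for $s_\lambda$ recalled above), the lemma with $j=\lambda_1-\lambda_2$ yields $2\int_0^1\cos^2(\pi u)\,s_\lambda(e^{2\pi iu},e^{-2\pi iu})\,du=1$ for every partition $\lambda$.

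Assembling the pieces: multiply by $m_\lambda$, sum over $\lambda\vdash q$, multiply by $z^q$ and sum over $q$. The interchange of these (for each $q$ finite) sums with the integral is harmless because the coefficient of $z^q$ in $H_\delta(K\{X\}_{\Omega},e^{2\pi iu},e^{-2\pi iu},z)$ is a finite trigonometric polynomial in $u$, hence bounded and continuous. This gives
\[
H(K\{X\}_{\Omega}^{\delta},z)=\sum_{q\ge 1}\Bigl(\sum_{\lambda\vdash q}m_\lambda\Bigr)z^q
=2\int_0^1\cos^2(\pi u)\,H_\delta(K\{X\}_{\Omega},e^{2\pi iu},e^{-2\pi iu},z)\,du,
\]
which is (\ref{Hilbert series of Weitzenboeck constants}). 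Substituting the definition (\ref{Hilbert series for Weitzenboeck action}) of $H_\delta$ and using $s_{(n_i,0)}(e^{2\pi iu},e^{-2\pi iu})=\sin(2\pi(n_i+1)u)/\sin(2\pi u)$ converts this into the compact form (\ref{compact form of Hilbert series of Weitzenboeck constants}). The step I expect to require the most care is not the trigonometry but the bookkeeping around the character: making sure $H_\delta$ really is the $GL_2(K)$-character of $K\{X\}_{\Omega}$, so that the $m_\lambda$ are the correct multiplicities, and checking that the termwise integration yields an honest identity of formal power series in $z$.
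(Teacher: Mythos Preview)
Your proposal is correct and follows essentially the same route as the paper: reduce to the identity $2\int_0^1\cos^2(\pi u)\,s_\lambda(e^{2\pi iu},e^{-2\pi iu})\,du=1$ via the decomposition (\ref{homogeneous component of free algebra}) and the one-dimensionality of the $\delta$-constants in each $W(\lambda)$, then read off the compact form from $s_{(n,0)}(e^{2\pi iu},e^{-2\pi iu})=\sin(2\pi(n+1)u)/\sin(2\pi u)$. The only difference is that you actually carry out the Fourier-mode verification of the integral identity, whereas the paper simply cites \cite{ADF} and remarks it can be checked directly.
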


\begin{proof} We follow the proof of Almkvist, Dicks, and Formanek
\cite{ADF} for the Hilbert series of $K\langle X\rangle^{\delta}$.
If $K\{X\}_{\Omega}^{(q)}$ decomposes as in
(\ref{homogeneous component of free algebra}),
then the Hilbert series of the algebra of $\delta$-constants is
\[
H(K\{X\}_{\Omega}^{\delta},z)
=\sum_{q\geq 1}\left(\sum_{\lambda\vdash q} m_{\lambda}\right)z^q.
\]
Hence, for the proof of (\ref{Hilbert series of Weitzenboeck constants})
it is sufficient to show that
\[
2\int_0^1\cos^2(\pi u)s_{\lambda}(e^{2\pi iu},e^{-2\pi iu})du=1,
\]
which was already used in the proof of \cite{ADF} (or may be verified
directly). The expression (\ref{compact form of Hilbert series of Weitzenboeck constants})
follows from the formula
\[
s_{(n,0)}(e^{2\pi iu},e^{-2\pi iu})
=\frac{e^{2\pi i(n+1)u}-e^{-2\pi i(n+1)u}}{e^{2\pi iu}-e^{-2\pi iu}}
=\frac{\sin(2\pi (n+1)u)}{\sin(2\pi u)}.
\]
\end{proof}

\begin{example}\label{elliptic integrals for free binary algebra}
Let $K\{X\}_{\Omega}=K\{X\}$ be the free nonassociative algebra,
i.e., $\Omega$ consist of one binary operation only.

(i) Let $d=2$ and $\delta(x_1)=0$, $\delta(x_2)=x_1$, i.e.,
\begin{equation}\label{delta in dimension 2}
\delta=\left(\begin{matrix}
0&1\\
0&0\\
\end{matrix}\right).
\end{equation}
By \cite{DrG} the Hilbert series of
$K\langle X\rangle^{\delta}$ (for the nonunitary algebra
$K\langle X\rangle$) is
\[
H(K\langle X\rangle^{\delta},t)
=\sum_{p\geq 0}\binom{2p+1}{p}t^{2p+1}
+\sum_{p\geq 1}\binom{2p}{p}t^{2p}.
\]
The algebra $K\langle X\rangle^{\delta}$ is a free associative algebra
and has a homogeneous set of free generators $Y$ with generating function
\begin{equation}\label{free generators of Weitzenboeck associative algebras}
G(Y,t)=t+\sum_{p\geq 1}c_{p+1}t^{2p}.
\end{equation}
Hence Proposition \ref{relation with invariants of free associative algebras} gives
\[
H(K\langle X\rangle^{\delta},t)
=\sum_{p\geq 0}\binom{2p+1}{p}c_{2p+1}t^{2p+1}
+\sum_{p\geq 1}\binom{2p}{p}c_{2p}t^{2p},
\]
where $c_n$ are the Catalan numbers.

Applying Theorem \ref{Weitzenboeck} we obtain
\[
H(K\{x\},t)=\frac{1-\sqrt{1-4t}}{2},
\]
\[
H_{\delta}(K\{X\},u_1,u_2,z)=\frac{1-\sqrt{1-4(u_1+u_2)z}}{2},
\]
\[
H(K\{X\}^{\delta},z)
=\int_0^1\cos^2(\pi u)\left(1-\sqrt{1-8z\cos(2\pi u)}\right)du,
\]
which is an elliptic integral. If $Y$ is a homogeneous set
of free generators of $K\{X\}^{\delta}$, then its generating
function is
\[
G(Y,z)=H(K\{X\}^{\delta},z)-H(K\{X\}^{\delta},z)^2.
\]
The beginning of the expansion of $G(Y,z)$ as a formal power series is
\[
G(Y,z)=z+z^2+2z^3+14z^4+56z^5+404z^6+2020z^7+\cdots .
\]
(Compare with the expansion of the generating function
(\ref{free generators of Weitzenboeck associative algebras}) of the free
generators of $K\langle X\rangle^{\delta}$.)
For the free generators of degree $\leq 3$ of $K\{X\}^{\delta}$ one may choose
\[
x_1,\quad x_1x_2-x_2x_1,\quad
(x_1x_1)x_2-(x_2x_1)x_1,\quad x_1(x_1x_2)-x_2(x_1x_1).
\]

(ii) Let $d=3$ and
\begin{equation}\label{delta in dimension 3}
\delta=\left(\begin{matrix}
0&1&0\\
0&0&1\\
0&0&0\\
\end{matrix}\right).
\end{equation}
Then (\ref{Hilbert series of Weitzenboeck constants}) and
(\ref{compact form of Hilbert series of Weitzenboeck constants}) give
\[
H(K\{X\}^{\delta},z)
=\int_0^1\cos^2(\pi u)\left(1-\sqrt{1-4z(1+2\cos(4\pi u))}\right)du.
\]
The beginning of the generating expansion for the free generators is
\[
z+2z^2+8z^3+58z^4+440z^5+3728z^6+33088z^7+\cdots .
\]
\end{example}

\begin{example}\label{elliptic integrals for free omega-algebra}
Let $K\{X\}_{\Omega}=K\{X\}_{\omega}$, i.e.,
$\Omega_n$ consists of one operation for each $n\geq 2$.
Applying Theorem \ref{Weitzenboeck} to (\ref{equation for omega}) in
Example \ref{examples of Hilbert series} (ii)
we obtain for $d=2$ and $\delta$ from (\ref{delta in dimension 2})
\[
H(K\{X\}_{\omega}^{\delta},z)=\frac{1}{2}\int_0^1\cos^2(\pi u)
f(2z\cos(2\pi u))du,
\]
where
\[
f(t)=1+t-\sqrt{1-6t+t^2}.
\]
For $d=3$ and $\delta$ from (\ref{delta in dimension 3}), we
obtain again elliptic integrals:
\[
H(K\{X\}_{\omega}^{\delta},z)=\frac{1}{2}\int_0^1\cos^2(\pi u)
f(z(1+2\cos(4\pi u)))du,
\]
which is, in low degrees,
\[
z+3z^2+21z^3+209z^4+2295z^5+27777z^6+354879z^7+\cdots
\]
\end{example}

\bigskip

Following Almkvist, Dicks and Formanek \cite{ADF}, we consider
a polynomial representation of $GL_2(K)$ in $KX$, i.e., we assume
that $KX$ has the $GL_2(K)$-module structure
\begin{equation}\label{GL-module KX}
KX\cong W(\lambda_1^{(1)},\lambda_2^{(1)})\oplus\cdots\oplus
W(\lambda_1^{(k)},\lambda_2^{(k)}).
\end{equation}
This induces also a representation of $SL_2(K)\subset GL_2(K)$.
We translate the results of \cite{ADF}
for the Hilbert series of $K\langle X\rangle^{SL_2(K)}$
to the case of $K\{X\}_{\Omega}^{SL_2(K)}$.

\begin{theorem}\label{SL2}
Let the $GL_2(K)$-module structure of $KX$, $X=\{x_1,\ldots,x_d\}$,
be given by \text{\rm (\ref{GL-module KX})}. Then the Hilbert series
of the algebra of $SL_2(K)$-invariants in $K\{X\}_{\Omega}$ is
\[
H(K\{X\}_{\Omega}^{SL_2(K)},z)=2\int_0^1 \sin^2(2\pi
u)H\left(K\{x\}_{\Omega},z
\sum_{i=1}^ks_{(\lambda_1^{(i)},\lambda_2^{(i)})} (e^{2\pi
iu},e^{-2\pi iu})\right)du
\]
\[
=2\int_0^1\sin^2(2\pi u)H\left(K\{x\}_{\Omega},z\sum_{i=1}^k
\frac{\sin(2\pi (\lambda_1^{(i)}-\lambda_2^{(i)}+1)u)}{\sin(2\pi u)}\right)du.
\]
\end{theorem}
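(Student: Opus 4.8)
The plan is to mirror, step for step, the proof of Theorem~\ref{Weitzenboeck} (and hence of the corresponding result of Almkvist, Dicks and Formanek \cite{ADF} for $K\langle X\rangle$), replacing the averaging functional used there for the Weitzenb\"ock action by the Weyl integration functional of $SU(2)$. The first step is to identify the $GL_2(K)$-character of $K\{X\}_{\Omega}$. Exactly as in the discussion preceding (\ref{homogeneous component of free algebra}), one substitutes into $H(K\{X\}_{\Omega},t_1,\ldots,t_d)$ the monomials in $u_1,u_2,z$ prescribed by the module structure (\ref{GL-module KX}); by Proposition~\ref{Hilbert series of free algebra}~(ii) together with the tree isomorphisms (\ref{module structure for fixed tree}) this shows that the character of $K\{X\}_{\Omega}$, as a $GL_2(K)$-module graded by total degree $z$, equals
\[
H\!\left(K\{x\}_{\Omega},\,z\sum_{i=1}^k s_{(\lambda_1^{(i)},\lambda_2^{(i)})}(u_1,u_2)\right).
\]
Writing this series as $\sum_{q\geq 1}\bigl(\sum_{\lambda\vdash q}m_{\lambda}s_{\lambda}(u_1,u_2)\bigr)z^q$, one gets $K\{X\}_{\Omega}^{(q)}=\bigoplus_{\lambda\vdash q}m_{\lambda}W(\lambda_1,\lambda_2)$, precisely as in (\ref{homogeneous component of free algebra}).

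Next, since $SL_2(K)$ is reductive in characteristic $0$, taking invariants respects the grading, so it suffices to compute $\dim\bigl(W(\lambda_1,\lambda_2)^{SL_2(K)}\bigr)$ for each irreducible $GL_2(K)$-module. Restriction to $SL_2(K)$ trivialises the determinantal twist, so $W(\lambda_1,\lambda_2)$ becomes the irreducible $SL_2(K)$-module of highest weight $\lambda_1-\lambda_2$; its trivial isotypic component is one-dimensional when $\lambda_1=\lambda_2$ and zero otherwise. Therefore
\[
H(K\{X\}_{\Omega}^{SL_2(K)},z)=\sum_{q\geq 1}\Bigl(\sum_{\substack{\lambda\vdash q\\ \lambda_1=\lambda_2}}m_{\lambda}\Bigr)z^q .
\]

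The technical core is then the orthogonality identity
\[
2\int_0^1\sin^2(2\pi u)\,s_{(\lambda_1,\lambda_2)}(e^{2\pi iu},e^{-2\pi iu})\,du=
\begin{cases}1,& \lambda_1=\lambda_2,\\ 0,& \lambda_1\neq\lambda_2,\end{cases}
\]
the $SL_2$-counterpart of the identity $2\int_0^1\cos^2(\pi u)s_{\lambda}(e^{2\pi iu},e^{-2\pi iu})\,du=1$ used in the proof of Theorem~\ref{Weitzenboeck}. Using $s_{(\lambda_1,\lambda_2)}(e^{2\pi iu},e^{-2\pi iu})=\sin\!\bigl(2\pi(\lambda_1-\lambda_2+1)u\bigr)/\sin(2\pi u)$ and the substitution $\theta=2\pi u$, the left-hand side becomes $\frac1\pi\int_0^{2\pi}\sin\theta\,\sin\!\bigl((\lambda_1-\lambda_2+1)\theta\bigr)\,d\theta$, which vanishes unless $\lambda_1=\lambda_2$ and equals $1$ then, by the orthogonality of $\{\sin(m\theta)\}_{m\geq 1}$ on $[0,2\pi]$ (this is just the Weyl orthogonality relation for the characters of $SU(2)$). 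Applying the functional $2\int_0^1\sin^2(2\pi u)(\,\cdot\,)\,du$ to the coefficient of $z^q$ in the character series above --- the interchange with the finite sum over $\lambda\vdash q$ being trivial since that coefficient is a trigonometric polynomial in $u$ --- turns $\sum_{\lambda\vdash q}m_{\lambda}s_{\lambda}$ into $\sum_{\lambda\vdash q,\ \lambda_1=\lambda_2}m_{\lambda}$, yielding the first displayed integral of the statement; the second follows by inserting $s_{(\lambda_1^{(i)},\lambda_2^{(i)})}(e^{2\pi iu},e^{-2\pi iu})=\sin\!\bigl(2\pi(\lambda_1^{(i)}-\lambda_2^{(i)}+1)u\bigr)/\sin(2\pi u)$.

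I expect the only genuinely delicate points to be bookkeeping ones: correctly reading off the $GL_2(K)$-character of $K\{X\}_{\Omega}$ via Proposition~\ref{Hilbert series of free algebra}~(ii) (this step is formally identical to the $K\langle X\rangle$ case of \cite{ADF}, because each $K\{X\}_{\Omega}^{(q)}$ is, as a $GL_2(K)$-module, a finite direct sum of copies of $K\langle X\rangle^{(q)}$), and fixing the normalisation of the weight $\sin^2(2\pi u)$ and of the interval $[0,1]$ so that the associated measure has total mass $1$ in the key identity. Once these are settled, the rest is a transcription of the argument of Almkvist, Dicks and Formanek.
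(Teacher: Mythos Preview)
Your proposal is correct and follows essentially the same route as the paper's own proof: decompose each homogeneous component of $K\{X\}_{\Omega}$ into irreducible $GL_2(K)$-modules via the character series, note that $W(\lambda_1,\lambda_2)^{SL_2(K)}$ is one-dimensional precisely when $\lambda_1=\lambda_2$, and then verify the orthogonality identity $2\int_0^1\sin^2(2\pi u)\,s_{\lambda}(e^{2\pi iu},e^{-2\pi iu})\,du=\delta_{\lambda_1\lambda_2}$. You in fact supply more detail than the paper (which simply cites \cite{ADF} for the integral identity), including the explicit reduction to the orthogonality of $\{\sin(m\theta)\}$ on $[0,2\pi]$.
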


\begin{proof}
As in the proof of Theorem \ref{Weitzenboeck}, it is sufficient to
take into account that $W(\lambda_1,\lambda_2)$ contains
a one-dimensional $SL_2(K)$-invariant if $\lambda_1=\lambda_2$
and does not contain $SL_2(K)$-invariants otherwise. Then we need to show that
\[
2\int_0^1\sin^2(2\pi u)s_{\lambda}(e^{2\pi iu},e^{-2\pi iu})du
=\delta_{\lambda_1\lambda_2},
\]
where $\delta_{pq}$ is the Kronecker delta. This can be checked directly and
was already used in \cite{ADF}.
\end{proof}

\begin{example}
If $d=2$ and $GL_2(K)$ acts naturally on $KX$, i.e. $KX\cong W(1,0)$, then
the results in \cite{DrG} for the Hilbert series
$H(K\langle X\rangle^{SL_2(K)},t)$ of the $SL_2(K)$-invariants
of the nonunitary free associative algebra give
\[
H(K\langle X\rangle^{SL_2(K)},t)
=\sum_{p\geq 1}c_{p+1}t^{2p}=\frac{1-2t^2-\sqrt{1-4t^2}}{2t^2}.
\]
By Proposition \ref{relation with invariants of free associative algebras}
(ii) we obtain that
\[
H(\{X\}^{SL_2(K)},z)=\sum_{p\geq 1}c_{2p}c_{p+1}z^{2p}.
\]
On the other hand, Theorem \ref{SL2} gives
\[
H(K\{X\}^{SL_2(K)},z)=\int_0^1
\sin^2(2\pi u)\left(1-\sqrt{1-8z\sin(2\pi u)}\right)du
\]
which is again an elliptic integral.
\end{example}

\begin{remark}
In order to obtain $\Omega$-analogues of the results in \cite{ADF}
for the invariants of $SL_r(K)$ and $UT_d(K)$ we equip $KX$
with the structure of a $GL_r(K)$-module with character (the trace
of $g\in GL_r(K)$ acting on $KX$)
\[
T_{KX}(u_1,\ldots,u_r)=\sum_{i=1}^k s_{\lambda^{(i)}}(u_1,\ldots,u_r).
\]
Then we replace in \cite{ADF} the $GL_r$-character
\[
H(K\langle X\rangle,u_1,\ldots,u_r)
=\frac{1}{1-zT_{KX}(u_1,\ldots,u_r)}
\]
with $H(K\{x\}_{\Omega},z\sum_{i=1}^kzT_{KX}(u_1,\ldots,u_r))$
and obtain integral expressions for
$H(K\{X\}_{\Omega}^{SL_r(K)},z)$ and $H(K\{X\}_{\Omega}^{UT_r(K)},z)$.
\end{remark}

\section*{Acknowledgements}

The first author wants to thank for the hospitality at the
Department of Mathematics of the Ruhr University in Bochum during
his visit there. Both authors thank Lothar Gerritzen for
stimulating discussions about the subject. They are especially indebted
to the anonymous referee for the historical comments which
led to the improvement of the exposition and the extension of the list of references.

\end{document}